\theoremstyle{plain}
\newtheorem{thm}{Theorem}[section]
\newtheorem{lem}[thm]{Lemma}
\newtheorem{prop}[thm]{Proposition}
\newtheorem{cor}[thm]{Corollary}
\theoremstyle{definition}
\newtheorem{rem}[thm]{Remark}
\newcommand{\ie}{\mbox{i.e.}}
\newcommand{\Z}{\mathbb Z_+}
\newcommand{\N}{\mathbb N}
\newcommand{\F}{\mathcal F}
\begin{document}
\title{Transitive points via Furstenberg family}
\author[J. Li]{Jian Li}
\date{\today}
\address{Department of Mathematics, University of Science and Technology of China,
Hefei, Anhui, 230026, P.R. China}
\address{Wu Wen-Tsun Key Laboratory of Mathematics, USTC, Chinese Academy of Sciences}

\email{lijian09@mail.ustc.edu.cn}
\begin{abstract}
Let $(X,T)$ be a topological dynamical system and $\mathcal{F}$ be a Furstenberg family
(a collection of subsets of $\mathbb{Z}_+$ with hereditary upward property). A point $x\in X$ is called an
$\mathcal{F}$-transitive one if $\{n\in\mathbb{Z}_+:\, T^n x\in U\}\in\F$ for every
nonempty open subset $U$ of $X$; the system $(X,T)$ is called
$\F$-point transitive if there exists some $\mathcal{F}$-transitive point. In
this paper, we aim to classify transitive systems by $\mathcal{F}$-point
transitivity. Among other things, it is shown that $(X,T)$ is a
weakly mixing E-system (resp.\@ weakly mixing M-system, HY-system)
if and only if it is $\{\textrm{D-sets}\}$-point transitive
(resp.\@ $\{\textrm{central sets}\}$-point transitive,
$\{\textrm{weakly thick sets}\}$-point transitive).

It is shown that every weakly mixing system is $\mathcal{F}_{ip}$-point transitive,
while we construct an $\mathcal{F}_{ip}$-point transitive system which is not weakly mixing.
As applications, we show that every transitive system with dense small periodic sets is disjoint from
every totally minimal system and a system is $\Delta^*(\mathcal{F}_{wt})$-transitive if and only if it is
weakly disjoint from every P-system.
\end{abstract}
\keywords{transitive point, family, weakly mixing}
\subjclass[2000]{37E05, 37B40, 54H20.}
\maketitle


\section{Introduction}
Throughout this paper a {\em topological dynamical system}
(TDS for short) is a pair $(X, T)$,
where $X$ is a non-vacuous compact metric space with a metric $d$ and
$T$ is a continuous map from $X$ to itself.
A non-vacuous closed invariant subset $Y \subset X$ (\ie, $TY\subset Y$)
defines naturally a {\em subsystem} $(Y, T)$ of $(X, T)$.

\medskip
Let $\mathbb Z$, $\mathbb Z_+$ and $\mathbb N$ denote the sets of the integers,
the non-negative integers, and the positive integers, respectively.

\medskip

Let $(X,T)$ be a TDS, for two opene (standing for non-empty open) subsets $U,V$ of $X$, set
\[N(U,V)=\{n\in\mathbb Z_+: T^nU\cap V\neq\emptyset\}=
\{n\in\mathbb Z_+: U\cap T^{-n}V\neq\emptyset\}.\]
We call $N(U,V)$ {the \em hitting time set of $U$ and $V$}.
Recall that a system $(X,T)$ is called {\em topologically transitive}
(or just {\em transitive}) if for every two opene subsets $U,V$ of $X$
the hitting time set $N(U,V)$ is infinite.
A system $(X,T)$ is called {\em weakly mixing}
if the product system $(X\times X, T\times T)$ is transitive;
{\em strongly mixing} if for every two opene subsets $U,V$ of $X$,
the hitting time set $N(U,V)$ is cofinite, \ie, there exists some $N\in \mathbb N$
such that $N(U,V)\supset \{N, N+1,\ldots\}$.

\medskip
A system $(X,T)$ is called {\em minimal} if it contains no proper subsystem.
Each point belonging to some minimal subsystem of $(X,T)$ is called a {\em minimal point}.
Let $(X,T)$ be a transitive system,
$(X,T)$ is called a {\em P-system} if it has dense periodic points;
an {\em M-system} if it has dense minimal points;
an {\em E-system} if it has an invariant measure with full support;
a {\em topologically ergodic system} if for every two opene subsets $U,V$ of $X$,
the hitting time set $N(U,V)$ is syndetic (bounded gaps).

\subsection{Furstenberg families}
Before going on, let us recall some notations related to a family (for more details see \cite{A97}).
For the set of nonnegative integers $\mathbb{Z}_+$,
denote by $\mathcal{P}=\mathcal{P}(\mathbb{Z}_+)$ the collection of all subsets of $\mathbb{Z}_+$.
A subset $\mathcal{F}$ of $\mathcal{P}$ is called a {\em Furstenberg family} (or just {\em family}),
if it is hereditary upward, \ie, $F_1\subset F_2$ and $F_1\in\mathcal F$ imply $F_2\in \mathcal F$.
A family $\mathcal F$ is called {\em proper} if it is a nonempty proper subset of $\mathcal P$,
\ie, neither empty nor all of $\mathcal P$. Any nonempty collection $\mathcal A$ of
subsets of $\mathbb Z_+$ naturally generates a family
\[\mathcal F(\mathcal A)=\{F\subset \mathbb Z_+:\, F\supset A \text{ for some } A \in\mathcal A\}.\]

For a family $\mathcal F$, the {\em dual family} of $\mathcal F$, denoted by $\kappa\mathcal F$, is
\[\{F\in \mathcal P: F\cap F'\neq \emptyset, \forall F'\in \mathcal F\}.\]
Sometimes the dual family $\kappa\mathcal F$ is also denoted by $\mathcal F^*$.
Let $\mathcal F_{inf}$ be the family of all infinite subsets of $\mathbb Z_+$.
It is easy to see that its dual family $\kappa\mathcal F_{inf}$
is the family of all cofinite subsets, denoted by $\mathcal F_{cf}$.

{\bf All the families considered in this paper are assumed
to be proper and contained in $\mathcal F_{inf}$.}

Let $F$ be a subset of $\mathbb Z_+$, the {\em upper Banach density of $F$} is
$$BD^*(F)=\limsup_{|I|\to\infty}\frac{|F\cap I|}{|I|}$$
where $I$ is taken over all nonempty finite intervals of $\mathbb Z_+$
and $|\cdot|$ denote the cardinality of the set.
Denote by $\F_{pubd}$ the family of sets with
positive upper Banach density.

A subset $F$ of $\Z$ is called {\em thick} if it contains arbitrarily long runs of positive integers,
\ie, for every $n\in\N$ there exists some $a_n\in\Z$ such that $[a_n, a_n+n]\subset F$;
{\em syndetic} if there is $N\in\mathbb N$ such that $[n,n+N]\cap F\neq \emptyset$ for every $n\in\Z$;
{\em piecewise syndetic} if it is the intersection of a thick set and a syndetic set.
The families of all thick sets, syndetic sets and piecewise syndetic sets
are denoted by $\mathcal F_{t}$, $\mathcal F_{s}$ and $\mathcal F_{ps}$, respectively.
It is easy to see that $\kappa \mathcal F_s=\mathcal F_t$.

For a sequence $\{p_i\}_{i=1}^{\infty}$ in $\N$, define the finite sums of $\{p_i\}_{i=1}^{\infty}$ as
$$FS\{p_i\}_{i=1}^{\infty}=\left\{ \sum\nolimits_{i\in \alpha}p_i:\,
\alpha \textrm{ is a nonempty finite subset of }\N\,\right\}.$$
A subset $F$ of $\mathbb Z_+$ is called an {\em IP set}
if there exists a sequence $\{p_i\}_{i=1}^{\infty}$ in $\N$
such that $ FS\{p_i\}_{i=1}^{\infty}\subset F$.
We denote by $\mathcal F_{ip}$ the family of all IP sets.

For a family $\mathcal F$, the {\em block family of $\F$}, denoted by $b\F$, is the
family consisting of sets $F\subset\Z$ for which there exists some $F'\in \F$ such that for
every finite subset $W$ of $F'$ one has $m+W \subset F$ for some $m\in \Z$ (see \cite{HLY09}).
It is easy to see that
$b\F=\allowbreak\{F\subset\Z:\ \exists F'\in\F, (\forall n\in\N)(\exists a_n\in\Z)
\textrm{ s.t.\@ } a_n +(F' \cap [0, n])\subset F\}$,
$b(b\F)=b\F$, $b\F_{cf}=\F_t$, $b\F_{s}=\F_{ps}$ and $b\F_{pubd}=\F_{pubd}$.

For a subset $F$ of $\Z$, let $F-F$ be $\{a-b: a,b\in F$ and $a>b \}$ if $0\not\in F$,
or $\{a-b: a,b\in F$ and $a\geq b \}$ if $0\in F$.
For a family $\mathcal F$, define {\em the difference family of $\F$} as
\[\Delta(\F)=\{F\subset\Z: \exists F'\in\F, \textrm{ s.t. } F'-F'\subset F\}\]
and $\Delta^*(\F)=\kappa\Delta(\F)$.

\begin{lem}
Let $\F$ be a family, then $\Delta(\F)=\Delta(b\F)$.
\end{lem}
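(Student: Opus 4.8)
The plan is to establish the two inclusions $\Delta(\F)\subseteq\Delta(b\F)$ and $\Delta(b\F)\subseteq\Delta(\F)$ separately. The first is essentially free: taking $m=0$ in the defining condition shows at once that $\F\subseteq b\F$ (every $F'\in\F$ witnesses its own membership in $b\F$), and the operation $\Delta$ is monotone, since a witness set realizing $F\in\Delta(\F_1)$ lies in $\F_1\subseteq\F_2$ and hence also realizes $F\in\Delta(\F_2)$. Applying monotonicity to $\F\subseteq b\F$ gives $\Delta(\F)\subseteq\Delta(b\F)$, so all the substance lies in the reverse inclusion.

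For the reverse inclusion I would begin with $F\in\Delta(b\F)$ and peel off the two layers of definition. First, there is some $G\in b\F$ with $G-G\subseteq F$. Second, since $G\in b\F$, the second (initial-segment) characterization of the block family supplies a witness $F'\in\F$ such that for every $n\in\N$ there is $a_n\in\Z$ with $a_n+(F'\cap[0,n])\subseteq G$. The goal is then to show that this same $F'$ already witnesses $F\in\Delta(\F)$; concretely, I aim to prove $F'-F'\subseteq G-G$, which combined with $G-G\subseteq F$ yields $F'-F'\subseteq F$ and hence $F\in\Delta(\F)$.

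The heart of the matter is a single translation step. Given a positive element $d=p-q$ of $F'-F'$ with $p,q\in F'$ and $p>q$, I would choose $n\ge p$, so that both $p$ and $q$ belong to $F'\cap[0,n]$; then $a_n+p$ and $a_n+q$ both lie in $G$, and $(a_n+p)-(a_n+q)=d$ with $a_n+p>a_n+q$, so $d\in G-G$. This shows every positive element of $F'-F'$ lands in $G-G\subseteq F$. The step I expect to require the most care is the single endpoint $0$: by the stated convention one has $0\in F'-F'$ precisely when $0\in F'$, and in that case one must separately verify $0\in F$, matching the convention consistently on the $G$-side (choosing the witness, or using invariance of the relevant families, so that the value $0$ is accounted for on both sides). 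The translation argument itself is routine; the delicate bookkeeping is entirely concentrated in reconciling the $0\in F\iff 0\in F'$ convention across the two families, after which $F'-F'\subseteq F$ follows and the proof closes.
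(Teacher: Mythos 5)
Your proposal is the paper's proof almost line for line: the easy inclusion via $\F\subseteq b\F$ and monotonicity of $\Delta$, and for the converse the identical witness-passing argument (given $G\in b\F$ with $G-G\subseteq F$, take $F'\in\F$ with $a_n+(F'\cap[0,n])\subseteq G$ for all $n$, and translate each pair $p>q$ of $F'$ into $G$ to conclude $p-q\in G-G\subseteq F$). The one place you diverge is instructive: the paper simply asserts $F'-F'\subseteq G-G$ with no comment, whereas you flag the element $0$ as the delicate point and leave it unresolved.

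You are right to be suspicious, and in fact that point cannot be repaired under the paper's literal convention that $0\in F-F$ exactly when $0\in F$: with that reading the lemma as stated is false. Take $\F=\{F'\subseteq\Z: F'\supseteq A\}$ with $A=\{0\}\cup\{10^n: n\in\N\}$; this family is proper and contained in $\F_{inf}$. Let $G=1+A$. Then $G\in b\F$ (witness $A$, every finite subset translates by $1$ into $G$), and $F:=G-G$ consists only of positive integers because $0\notin G$; hence $F\in\Delta(b\F)$. But every $F'\in\F$ contains $0$, so $0\in F'-F'$ while $0\notin F$, whence $F\notin\Delta(\F)$. So the step you could not close is not a defect of your argument relative to the paper's: it is an imprecision in the paper's conventions that its own proof silently steps over. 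Under the reading that makes the lemma true --- interpret $F-F$ as the set of positive differences $\{a-b:\ a,b\in F,\ a>b\}$ for every $F$ --- your translation argument (which is exactly the paper's) already constitutes a complete proof, and there is nothing about $0$ left to reconcile.
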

\begin{proof}
Since $\F\subset b\F$, $\Delta(\F)\subset \Delta(b\F)$.
If $F\in \Delta(b\F)$, there exists $F_1\in b\F$ such that $F_1-F_1\subset F$.
Since $F_1\in b\F$, there exists $F_2\in \F$ and $\{a_n\}_{n=1}^\infty $ in $\Z$ such that
$a_n+F_2\cap[0,n]\subset F_1$. Then $F_2-F_2\subset F_1-F_1$ and $F\in \Delta(\F$).
Hence, $\Delta(\F)=\Delta(b\F)$.
\end{proof}

\subsection{Three ways to classify transitive systems}
It is well known that the study of transitive systems and its
classification play a big role in topological dynamics.
There are several ways to classify transitive systems.
The first one was started with Furstenberg
by the hitting time sets of two opene subsets.
Let $\F$ be a family,
we call $(X,T)$ is {\em $\F$-transitive} if for every two opene subsets
$U, V$ of $X$ the hitting time set $N(U,V)\in \F$.
In his seminal paper \cite{F67}, Furstenberg showed
that a TDS $(X,T)$ is weakly mixing if and only if it is $\{\textrm{thick sets}\}$-transitive.

The second way is by weak disjointness.
Two TDSs are called {\em weakly disjoint} if their product system is transitive \cite{P72}.
Then a system is weakly mixing if and only if it is weakly disjoint from itself.

The third way is by the complexity of open covers
which was introduced in \cite{BHM00}.
For a TDS $(X, T)$ and a finite open cover $\alpha$ of $X$,
let $N(\alpha)$ denote the number of sets in a finite subcover of $\alpha$ with small cardinality.
For an infinite set $A=\{a_1<a_2<\cdots \} \subset \Z$ put
\[ c_A(\alpha,n)= N(T^{-a_1}\alpha\vee T^{-a_2}\alpha\vee \cdots\vee T^{-a_n}\alpha).\]
We call $c_A(\alpha,n)$ the {\em complexity function} of the cover $\alpha$ along $A$.
An open cover $\alpha =\{U_1,U_2,\cdots,U_k\}$ of $X$ is called {\em non-trivial}
if $U_i$ is not dense in $X$ for each $1\le i \le k$.
Let $\F$ be a family,
we call $(X,T)$ is {\em $\F$-scattering} if for every $A\in \F$ and non-trivial open
cover $\alpha$ of $X$ the complexity function  $c_A(\alpha,n)$ is unbounded.

Recently, the authors in \cite{A97, AG01,BHM00, G04, HY02, HY04, SY04}
have successfully classified transitive systems by the above three ways.
We summarize the results in the following table:

\begin{longtable}{|c|c|c|c|}
\hline
Transitive properties  & $\F$-transitivity & $\F$-scattering & \multicolumn{1}{c|}{Weakly disjointness}\\
\hline
\multirow{2}{*}{Transitivity} & \multirow{2}{*}{$\F_{inf}$}  & \multirow{2}{*}{No?}& Mild mixing \\
&&&or trivial system\\
\hline
Total transitivity & $\kappa\F_{rs}$ & No? & Periodic system \\
\hline
\multirow{2}{*}{Weakly scattering} & \multirow{2}{*}{?} & \multirow{2}{*}{No?} & Minimal equi- \\
&&& continuous system \\
\hline
\multirow{2}{*}{Scattering} & \multirow{2}{*}{$\Delta^*(\F_{ps})$} & \multirow{2}{*}{$\F_{ps}$ or $\{\Z\}$}
&Minimal system \\
&&&  or M-system \\
\hline
Strong scattering & $\Delta^*(\F_{pubd})$  & $\F_{pubd}$ or $\F_{pud}$ & E-system \\
\hline
\multirow{2}{*}{Extreme scattering} & \multirow{2}{*}{?} & \multirow{2}{*}{?} & Topologically\\
&&&ergodic system \\
\hline
Weak mixing & $\F_t$ & ? &  Itself \\
\hline
Mild mixing &$\Delta^*(\F_{ip})$ & $\F_{ip}$ & Transitive system \\
\hline
Full scattering & ? & $\F_{inf}$ &  \multicolumn{1}{c|}{No}\\
\hline
Strong mixing & $\F_{cf}$ & No & \multicolumn{1}{c|}{No}\\
\hline
\end{longtable}

We understand this table in the following way.
For example, a system is scattering if and only if it is $\Delta^*(\F_{ps})$-transitive
if and only if it is  $\F_{ps}$-scattering if and only if it is $\{\Z\}$-scattering
if and only if it is weakly disjoint from every minimal system
if and only if it is weakly disjoint from every M-system.

\subsection{A new way to classify transitive systems}
In this paper we propose a new way to classify transitive systems.
Before going on, we recall some basic notions.

\medskip
For $x\in X$, denote the orbit of $x$ by $Orb(x,T)=\{x,Tx, T^2x, \ldots\}$.
Let $\omega(x,T)$ be the $\omega$-limit set of $x$, \ie, $\omega(x,T)$ is the limit set of $Orb(x,T)$.
A point $x\in X$ is called a {\em recurrent point} if $x\in\omega(x,T)$; a {\em transitive point}
if $\omega(x,T)=X$. It is easy to see that a system $(X,T)$ is transitive if and only if
the set of all transitive points, denoted by $Trans(X,T)$, is a dense $G_\delta$ subset of $X$ and
$(X,T)$ is minimal if and only if $Trans(X,T)=X$.

\medskip

Let $(X,T)$ be a TDS, for $x\in X$ and an opene subset $U$ of $X$, set
\[N(x,U)=\{n\in\Z:\, T^nx\in U\}. \]
We call $N(x,U)$ {the \em entering time set} of $x$ into  $U$.
Then it is easy to see that a point $x\in X$ is a transitive point if and only if
for every opene subset $U$ of $X$ the entering time set $N(x,U)$ is infinite.
This suggests that we can use  the entering time set of a point into an opene subset
to characterize transitive points.
Let $\F$ be a family, a point $x\in X$ is called an {\em $\F$-transitive point}
if for every opene subset $U$ of $X$ the entering time set $N(x, U)\in\F$.
A system $(X,T)$ is called {\em $\F$-point transitive} if there exists some $\F$-transitive point.
In this paper, we aim to classify transitive systems by $\F$-point transitivity.
We summarize our results in the following table:

\begin{table}[!hbt]
\begin{tabular}{|c|c||c|c|}
\hline
 \multirow{2}{*}{Transitive properties} & $\F$-point & \multirow{2}{*}{Transitive properties}  &$\F$-point \\
&transitivity& &transitivity\\
\hline
Transitive system & $b\F_{ip}$ & Weakly mixing system & ?\\ \hline
E-system & $\F_{pubd}$ & Weakly mixing E-system & $\F_D$ \\ \hline
M-system & $\F_{ps}$ & Weakly mixing M-system & $\F_{cen}$\\ \hline
Transitive system with & \multirow{2}{*}{$b\F_{wt}$}
& \multirow{2}{*}{HY-system} & \multirow{2}{*}{$\F_{wt}$}\\
dense small periodic sets&&& \\ \hline
\end{tabular}
\end{table}

This paper is organized as follows.
In Section 2, we discuss the connection between families and topological dynamics and
show some basic properties of $\F$-point transitivity.
In Sections 3 and 4, we prove the main results which are declared in the above table.
It is shown that every weakly mixing system is $\F_{ip}$-point transitive, while
we construct an $\F_{ip}$-point transitive system which is not weakly mixing.
In the final section as applications, we discuss disjointness and weak disjointness.
We show that every transitive system with dense small periodic sets is disjoint from
every totally minimal system and a system is $\Delta^*(\F_{wt})$-transitive if and only if it is
weakly disjoint from every P-system.

\subsection*{Acknowledgement}
This work was supported in part  by
the National Natural Science Foundation of China (Nos.\@ 11001071, 11071231, 10871186).
The author wish to thank Prof.\@ Xiangdong Ye
for the careful reading and helpful suggestions.
The author also thanks the referee for his/her helpful
suggestions concerning this paper.

\section{Preliminary}

The idea of using families to
describe dynamical properties goes back at least to Gottschalk and Hedlund \cite{GH55}.
It was developed further by Furstenberg \cite{F81}.
For a systematic study and recent results, see \cite{A97}, \cite{G04}, \cite{HY04} and \cite{HY05}.

Let $(X,T)$ be a TDS and $\F$ be a family,
a point $x\in X$ is called an {\em $\F$-recurrent point}
if for every neighborhood $U$ of $x$ the entering time set $N(x, U)\in\F$.
Denote the set of all $\F$-recurrent points by $Rec_{\F}(X,T)$.

It is well known that the following lemmas hold
(see, e.g., \cite[Theorem 1.15, Theorem 1.17, Theorem 2.17]{F81})

\begin{lem} Let $(X, T)$ be a TDS and $x\in X$. Then
\begin{enumerate}
\item $x$ is a minimal point if and only if it is an $\mathcal F_{s}$-recurrent point.
\item $x$ is a recurrent point if and only if it is an $\mathcal F_{ip}$-recurrent point.
\end{enumerate}
\end{lem}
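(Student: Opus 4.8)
The plan is to treat the two equivalences separately, since each is a classical recurrence criterion: part (1) is the classical characterization of the points of a minimal set as precisely the uniformly (syndetically) recurrent ones, while part (2) identifies recurrence with the IP-return property. In both cases one implication is routine and the other carries the content, so I would concentrate on the two nontrivial directions.

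For part (1) write $M=\overline{Orb(x,T)}$, and assume (harmlessly) that neighborhoods are open. Forward: suppose $M$ is minimal and fix a neighborhood $U$ of $x$. Since every orbit in a minimal set is dense, each $y\in M$ satisfies $T^k y\in U$ for some $k\ge 0$, so the open sets $\{T^{-k}U\}_{k\ge 0}$ cover $M$; by compactness finitely many suffice, say $k=0,1,\dots,N$. Then for every $m\ge 0$ we have $T^m x\in M\subseteq\bigcup_{k\le N}T^{-k}U$, giving some $k\in[0,N]$ with $m+k\in N(x,U)$; thus every block of length $N+1$ meets $N(x,U)$, which is therefore syndetic. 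Converse: assume $x$ is $\mathcal F_s$-recurrent and show $x\in\overline{Orb(z,T)}$ for every $z\in M$, which forces $\overline{Orb(z,T)}=M$ for all such $z$ and hence $M$ minimal. Given $z\in M$ and $U\ni x$, pick $r>0$ with $B(x,2r)\subseteq U$ and let $N$ be a syndetic gap bound for $N(x,B(x,r))$. By uniform continuity of $T^0,\dots,T^N$ choose $\delta\in(0,r)$ with $d(a,b)<\delta\Rightarrow d(T^j a,T^j b)<r$ for $0\le j\le N$. Taking $m$ with $d(T^m x,z)<\delta$ and a return time $k_0\in[m,m+N]$ of $x$ into $B(x,r)$, the index $j=k_0-m$ satisfies $T^j z\in B(x,2r)\subseteq U$, proving $x\in\overline{Orb(z,T)}$.

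For part (2) recall that $x$ is recurrent iff $N(x,U)$ is infinite for every neighborhood $U$. The backward implication is immediate, since every IP set is infinite. For the forward implication, fix a neighborhood $U$ of $x$ and construct inductively positive integers $p_1,p_2,\dots$ and a decreasing sequence $U=V_0\supseteq V_1\supseteq\cdots$ of neighborhoods of $x$ subject to the invariant: for every nonempty $\alpha\subseteq\{1,\dots,n\}$, $T^{\sum_{j\in\alpha}p_j}V_n\subseteq V_{\min\alpha-1}$. At stage $n+1$ recurrence provides $p_{n+1}$ with $T^{p_{n+1}}x\in V_n$, chosen larger than $\sum_{j\le n}p_j$ (possible since $N(x,V_n)$ is infinite) so that $FS\{p_i\}_{i=1}^\infty$ is infinite. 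The new subsets are $\alpha=\beta\cup\{n+1\}$ with $\beta\subseteq\{1,\dots,n\}$: for nonempty $\beta$ with $k=\min\beta$ the hypothesis gives $T^{\sum_{j\in\beta}p_j}(T^{p_{n+1}}x)\in T^{\sum_{j\in\beta}p_j}V_n\subseteq V_{k-1}$, while $\beta=\emptyset$ gives $T^{p_{n+1}}x\in V_n$. Since each such image of $x$ lands in an open set, continuity of the finitely many maps $T^{\sum_{j\in\beta}p_j+p_{n+1}}$ furnishes a common neighborhood $V_{n+1}\subseteq V_n$ of $x$ making all the corresponding inclusions hold; the old inclusions persist because $V_{n+1}\subseteq V_n$. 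As $x\in V_n$ for every $n$, the invariant yields $FS\{p_i\}_{i=1}^\infty\subseteq N(x,U)$, so $N(x,U)$ is an IP set and $x$ is $\mathcal F_{ip}$-recurrent.

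The step I expect to be the main obstacle is the bookkeeping in the forward direction of part (2): at stage $n+1$ one must simultaneously arrange inclusions for all $2^n$ subsets $\beta\subseteq\{1,\dots,n\}$, and choosing the inductive invariant to track $V_{\min\alpha-1}$ rather than merely $V_0$ is precisely what lets a single continuity-based shrinkage at each stage control every new finite sum at once. The (uniform) continuity estimates are the only analytic input in either converse; the remaining ingredients are compactness in part (1) and this clean induction in part (2).
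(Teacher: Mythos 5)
Your proof is correct in both parts, but there is nothing in the paper to compare it against line by line: the paper states this lemma as classical and simply cites Furstenberg's book \cite{F81} (Theorems 1.15, 1.17 and 2.17 there), so your contribution is to supply the standard arguments in full. Part (1) is the textbook proof: covering the minimal set $M=\overline{Orb(x,T)}$ by finitely many preimages $T^{-k}U$, $0\le k\le N$, to get syndeticity, and, conversely, transporting a syndetic return of $x$ into $B(x,r)$ along a $\delta$-close orbit segment (uniform continuity of $T^0,\dots,T^N$) to show $x\in\overline{Orb(z,T)}$ for every $z\in M$, whence $M$ is minimal; both halves are sound, including the implicit use of the equivalence between ``$x$ is a minimal point'' and ``$\overline{Orb(x,T)}$ is minimal.'' Part (2) is the standard nested-neighborhood induction for IP-recurrence, and it also checks out. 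It is worth noting that essentially the same device does appear elsewhere in the paper, namely in the Claim inside the proof of Theorem \ref{thm:weak-mixing}, but with lighter bookkeeping: there one keeps the single invariant that every $m\in FS\{p_i\}_{i=1}^n$ is a return time into $U$, so that $U_{n+1}=U\cap\bigcap_{m\in FS\{p_i\}_{i=1}^n}T^{-m}U$ is automatically an open neighborhood of the recurrent point, and no separate continuity-based shrinkage is required; your invariant $T^{\sum_{j\in\alpha}p_j}V_n\subseteq V_{\min\alpha-1}$, ranging over all $2^n$ subsets $\alpha$, proves a slightly stronger nested statement at the cost of the extra bookkeeping you yourself flag as the main obstacle. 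Either induction yields $FS\{p_i\}_{i=1}^\infty\subseteq N(x,U)$, so your argument stands as a complete and correct substitute for the omitted classical proof.
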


Recall that a TDS $(X,T)$ is called {\em $\mathcal F$-transitive}
if for every two opene subsets $U,V$ of $X$ the hitting time set $N(U,V) \in\mathcal F$;
{\em $\mathcal F$-mixing} if $(X\times X, T\times T)$ is $\mathcal F$-transitive.

\begin{lem}[\cite{F67,A97}] Let $(X, T)$ be a TDS and $\mathcal F$ be a family. Then
\begin{enumerate}
\item $(X, T )$ is weakly mixing if and only if it is $\mathcal F_{t}$-transitive.
\item $(X, T )$ is strongly mixing if and only if it is $\mathcal F_{cf}$-transitive.
\item $(X,T)$ is $\mathcal F$-mixing if and only if it is $\mathcal F$-transitive and weakly mixing.
\end{enumerate}
\end{lem}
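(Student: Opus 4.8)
The plan is to treat the three parts separately: (2) is immediate, (1) splits into two implications of which the converse is the one serious obstacle, and (3) reduces to a short \emph{filtering} argument built on weak mixing. For (2) there is essentially nothing to do, since $\F_{cf}$ is by definition the family of cofinite subsets of $\Z$, so the assertion ``$N(U,V)\in\F_{cf}$ for all opene $U,V$'' is a verbatim restatement of the definition of strong mixing recalled in the introduction.

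For (1) I would first show that weak mixing implies $\F_t$-transitivity, the input being the classical fact that a weakly mixing system has all of its finite powers $(X^{k},T\times\cdots\times T)$ transitive. Granting this, fix opene $U,V$ and an integer $n$; I want a run $[a,a+n]\subseteq N(U,V)$. Since a transitive map of a compact space is onto, each $T^{-i}V$ is opene, so I may apply transitivity of $X^{n+1}$ to the opene sets $U\times U\times\cdots\times U$ and $V\times T^{-1}V\times\cdots\times T^{-n}V$: there is $a$ with $T^{a}U\cap T^{-i}V\neq\emptyset$ for $i=0,1,\dots,n$, that is, $T^{a+i}U\cap V\neq\emptyset$ for each such $i$, whence $\{a,a+1,\dots,a+n\}\subseteq N(U,V)$. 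As $n$ was arbitrary, $N(U,V)$ is thick. The converse, that $\F_t$-transitivity forces weak mixing, is where the real work sits. Here $\F_t$-transitivity gives transitivity at once (thick sets are infinite), and the task reduces to showing the product $X\times X$ is transitive, i.e.\@ that $N(U_1,V_1)\cap N(U_2,V_2)\neq\emptyset$ for all opene $U_i,V_i$. The main obstacle is that thickness of the individual sets $N(U_i,V_i)$ is not enough by pure combinatorics (two thick sets can be disjoint, e.g.\@ alternating blocks of growing length), so one must reinject the dynamics; at this point I would invoke Furstenberg's classical intersection/chaining argument from \cite{F67}.

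For (3) the forward implication is formal: if $(X,T)$ is $\F$-mixing then, taking the second coordinate trivial, $N(U,V)=N(U\times X,\,V\times X)\in\F$, so $(X,T)$ is $\F$-transitive, and since $\F\subseteq\F_{inf}$ the $\F$-transitivity of the product makes the product transitive, i.e.\@ $(X,T)$ is weakly mixing. For the converse I assume $(X,T)$ weakly mixing and $\F$-transitive, fix opene $U_1,U_2,V_1,V_2$, and aim at $N(U_1,V_1)\cap N(U_2,V_2)\in\F$. As $\F$ is hereditary upward it suffices to produce opene $\tilde U,\tilde V$ in $X$ with $N(\tilde U,\tilde V)\subseteq N(U_1,V_1)\cap N(U_2,V_2)$, for then $N(\tilde U,\tilde V)\in\F$ by $\F$-transitivity and the intersection inherits membership. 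Weak mixing supplies the alignment: transitivity of the product yields some $s\in N(U_1,U_2)\cap N(V_1,V_2)$, and I put $\tilde U=U_1\cap T^{-s}U_2$ and $\tilde V=V_1\cap T^{-s}V_2$, both opene. If $n\in N(\tilde U,\tilde V)$ is witnessed by $x\in\tilde U$ with $T^{n}x\in\tilde V$, then $x\in U_1$ and $T^{n}x\in V_1$ give $n\in N(U_1,V_1)$, while $T^{s}x\in U_2$ together with $T^{n}(T^{s}x)=T^{s}(T^{n}x)\in V_2$ gives $n\in N(U_2,V_2)$; hence $N(\tilde U,\tilde V)$ lies in the intersection. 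The only genuinely hard point in the lemma is thus the converse of (1), and the filtering device just used for (3) is the reusable gadget I expect to reappear once the coarse family $\F$ is replaced by the finer families studied later in the paper.
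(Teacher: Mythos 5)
The paper offers no proof of this lemma at all --- it is quoted from \cite{F67,A97} --- so the comparison can only be against the classical arguments it points to. Your part (2) is indeed purely definitional; your part (3) is correct and complete (the filtering construction $\tilde U=U_1\cap T^{-s}U_2$, $\tilde V=V_1\cap T^{-s}V_2$ together with hereditary upwardness of $\F$ is exactly the standard argument); and the forward half of (1) is fine, granted the classical theorem that weak mixing makes every finite power transitive. Note, though, that this cited theorem is of the same depth as the statement you are proving, and it is itself usually proved by iterating your own filtering gadget; a self-contained route would derive thickness of $N(U,V)$ by applying that gadget inductively to the sets $N(U,T^{-i}V)$, $i=0,\dots,n$, whose intersection being nonempty is precisely the statement that $N(U,V)$ contains a run of length $n+1$.

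The genuine gap is the converse half of (1). Invoking ``Furstenberg's classical intersection/chaining argument from \cite{F67}'' is circular at this point: that chaining argument --- the filter-base property of the sets $N(U,V)$, i.e.\ your gadget from (3) --- is a property of \emph{weakly mixing} systems, since it needs product transitivity to produce the common time $s\in N(U_1,U_2)\cap N(V_1,V_2)$, and weak mixing is exactly what is to be proved. The missing step has a short direct proof in which thickness plays the role that weak mixing played in (3). By $\F_t$-transitivity (thick sets are nonempty) choose, \emph{separately}, $k\in N(U_1,U_2)$ and $l\in N(V_1,V_2)$, and put $U=U_1\cap T^{-k}U_2$, $V=V_1\cap T^{-l}V_2$, both opene. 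If $n\in N(U,V)$, witnessed by $x\in U$ with $T^nx\in V$, then $n\in N(U_1,V_1)$, and also $T^kx\in U_2$ with $T^{n+l-k}(T^kx)=T^{n+l}x\in V_2$, so $n+(l-k)\in N(U_2,V_2)$ whenever $n+l-k\ge 0$. Hence $N(U,V)\subseteq N(U_1,V_1)$ and $\bigl(N(U,V)+(l-k)\bigr)\cap\Z\subseteq N(U_2,V_2)$. Now use that $N(U,V)$ is thick: it contains intervals of length $|l-k|+1$ arbitrarily far out, and each such interval contains both some $m$ and $m+(l-k)$; then $n=m+(l-k)$ lies in $N(U_1,V_1)\cap N(U_2,V_2)$. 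This produces infinitely many common hitting times, so $(X\times X,T\times T)$ is transitive. This is precisely the point where thickness, as opposed to mere infiniteness, of the return time sets is used, and it is the one step your sketch delegates to a citation that cannot be used non-circularly.
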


Recall that a TDS $(X,T)$ is called {\em $\F$-center}
if for every opene subset $U$ of $X$ the hitting time set $N(U,U)\in\F$.

It is well known that a system $(X,T)$ is transitive if and only if there exists some transitive point.
It is interesting that how to characterize transitive systems by transitive points via a family.

Let $(X,T)$ be a TDS and $\mathcal F$ be a family,
a point $x\in X$ is called an {\em $\mathcal F$-transitive point}
if for every opene subset $U$ of $X$ the entering time set $N(x, U)\in\mathcal F$.
Denote the set of all $\mathcal F$-transitive points by $Trans_{\mathcal F}(X,T)$.
The system $(X,T)$ is called {\em $\F$-point transitive} if there exists some $\F$-transitive point.

Though the terminology ``$\F$-point transitivity'' is first  introduced in this paper,
the idea has appeared in several literatures, such as \cite{HPY07,HY05}.
We state their results in our way as

\begin{thm} \label{thm:E-M-system}
Let $(X,T)$ be a TDS. Then
\begin{enumerate}
\item $(X,T)$ is an E-system if and only if it is $\F_{pubd}$-point transitive if and only if
$Trans_{\F_{pubd}}(X,T)=Trans(X,T)\neq\emptyset$ (\cite{HPY07}).
\item $(X,T)$ is an M-system if and only if it is $\F_{ps}$-point transitive if and only if
$Trans_{\F_{ps}}(X,T)=Trans(X,T)\neq\emptyset$ (\cite{HY05}).
\end{enumerate}
\end{thm}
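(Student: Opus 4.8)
The plan is to prove both statements via a common two-step scheme. Since $\F_{pubd},\F_{ps}\subseteq\F_{inf}$, any $\F$-transitive point is in particular a transitive point, so $Trans_{\F}(X,T)\subseteq Trans(X,T)$ in each case; it therefore suffices to prove (a) that the named structural property forces \emph{every} transitive point to be $\F$-transitive, and (b) that one $\F$-transitive point forces the structural property. The engine for (a) is a single transfer principle: if $x$ is transitive and $w$ is a point whose forward orbit hits an opene set $V$ with $\overline V\subseteq U$ at finitely many times $m_1<\dots<m_s$ inside a window, then openness of $V$ gives an opene neighborhood $O$ of $w$ on which all the conditions $T^{m_1}(\cdot),\dots,T^{m_s}(\cdot)\in V$ persist, and transitivity of $x$ yields $p$ with $T^px\in O$, whence $\{p+m_1,\dots,p+m_s\}\subseteq N(x,U)$. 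Thus any return pattern of $w$ to $V$ reappears, up to a shift, inside $N(x,U)$.

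First I would treat (1). For the forward direction let $\mu$ be an invariant measure with full support and fix opene $U$; choosing opene $V$ with $\overline V\subseteq U$ we have $\mu(V)>0$, and the pointwise ergodic theorem produces a point $w$ visiting $V$ with positive lower density $c>0$. Feeding the arbitrarily long windows on which $w$ meets $V$ with frequency at least $c/2$ into the transfer principle gives $BD^*(N(x,U))\ge c/2>0$ for every transitive $x$, so $Trans(X,T)\subseteq Trans_{\F_{pubd}}(X,T)$. For the converse, let $x$ be $\F_{pubd}$-transitive, fix a countable base $\{U_j\}$ of opene sets and opene $V_j$ with $\overline{V_j}\subseteq U_j$. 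Averaging point masses $\nu_k=\frac{1}{|I_k|}\sum_{n\in I_k}\delta_{T^nx}$ along intervals $I_k$ witnessing $BD^*(N(x,V_j))>0$ and passing to a weak-$*$ limit yields a $T$-invariant measure $\mu_j$ with $\mu_j(\overline{V_j})\ge\limsup_k|N(x,V_j)\cap I_k|/|I_k|>0$, hence $\mu_j(U_j)>0$; then $\mu=\sum_j 2^{-j}\mu_j$ is invariant with full support and $(X,T)$ is an E-system.

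For (2), the forward direction uses that density of minimal points gives, for opene $V$ with $\overline V\subseteq U$, a minimal point $z\in V$, which is $\F_s$-recurrent by the characterization of minimal points stated above, so $N(z,V)$ is syndetic with a fixed gap bound $N$. Transferring its restriction to longer and longer windows shows $N(x,U)$ contains arbitrarily long runs with gaps at most $N$, i.e.\ it is piecewise syndetic; hence $Trans(X,T)\subseteq Trans_{\F_{ps}}(X,T)$. For the converse, let $x$ be $\F_{ps}$-transitive and fix opene $U$ with opene $V$ such that $\overline V\subseteq U$. Piecewise syndeticity of $N(x,V)$ gives a gap bound $N$ and intervals $[a_k,b_k]$ with $b_k-a_k\to\infty$ on which the return set has gaps at most $N$; passing to a limit $y=\lim_k T^{a_k}x$ makes $N(y,\overline V)$ syndetic with gap at most $N$, so the forward orbit of $y$ lies in the closed set $C=\bigcup_{i=0}^{N}T^{-i}\overline V$. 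A minimal set $M\subseteq\overline{Orb(y,T)}\subseteq C$ must meet some $T^{-i}\overline V$, and minimality then forces $M\cap\overline V\neq\emptyset$, producing a minimal point in $\overline V\subseteq U$; as $U$ is arbitrary, minimal points are dense and $(X,T)$ is an M-system.

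I expect the two converse directions to be the main obstacles. In (1) the subtlety is that a single weak-$*$ limit of Birkhoff averages need not have full support, so one must build a separate measure charging each basic opene set and superpose them, checking invariance of each $\mu_j$ (the boundary defect is of order $1/|I_k|\to0$) and using the portmanteau inequality $\mu_j(\overline{V_j})\ge\limsup_k\nu_k(\overline{V_j})$ for the closed set $\overline{V_j}$. In (2) the crux is the compactness step that upgrades piecewise syndeticity into an actual point $y$ with syndetic $\overline V$-returns, together with the final use of minimality to locate a minimal point inside the smaller set $\overline V$ rather than merely in $\bigcup_{i=0}^{N}T^{-i}\overline V$.
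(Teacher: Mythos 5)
Your proposal cannot be checked against a proof in the paper, because the paper gives none: Theorem \ref{thm:E-M-system} is stated as a pair of imported results, with (1) attributed to \cite{HPY07} and (2) to \cite{HY05}. Judged on its own merits, your argument is correct and complete, and it is in fact the natural proof relative to the paper's framework. Your ``transfer principle'' is precisely Lemma \ref{lem:F-point-center}: a transitive, $\F$-point center system satisfies $Trans_{b\F}(X,T)=Trans(X,T)$. Your forward direction of (1) amounts to showing that an E-system is $\F_{pubd}$-point center (via the ergodic theorem applied to the full-support invariant measure), and of (2) that an M-system is $\F_{s}$-point center (minimal points are $\F_s$-recurrent, as recorded in Section 2); the block-family identities $b\F_{pubd}=\F_{pubd}$ and $b\F_{s}=\F_{ps}$ from Section 1 then finish exactly as your windowed-configuration argument does. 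The two converses, which you rightly identify as the substantive part, reconstruct the arguments of the cited papers and are sound: for (1), Krylov--Bogolyubov averaging along the witnessing intervals, invariance up to an $O(1/|I_k|)$ defect, the portmanteau inequality on the closed sets $\overline{V_j}$, and the superposition $\mu=\sum_j 2^{-j}\mu_j$ to get full support; for (2), the compactness upgrade of a piecewise syndetic return set to a point $y$ with $\overline{Orb(y,T)}\subset\bigcup_{i=0}^{N}T^{-i}\overline{V}$, followed by locating a minimal subset and pushing it into $\overline{V}$. Two minor simplifications you could make: in the transfer principle $V\subset U$ already suffices (the closure condition $\overline{V}\subset U$ is genuinely needed only in the two converses); and in the last step of (2) you need nothing from $M$ beyond its being closed and invariant, since $z\in M\cap T^{-i}\overline{V}$ gives $T^iz\in M\cap\overline{V}$ directly.
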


The following remark shows some basic facts about $\F$-point transitivity.
\begin{rem}
(1). It is easy to see that $x\in X$
is an $\mathcal F$-transitive point if and only if for every $F\in \kappa \mathcal F$
one has $\{T^nx: n\in F\}$ is dense in X.

(2). If $x\in X$ is an $\F$-transitive point, then so is $Tx$.
Thus, if $(X,T)$ is $\F$-point transitive then $Trans_{\F}(X,T)$ is dense in $X$.

(3). It should be noticed that $\F$-transitivity may differ greatly from $\F$-point transitivity.
For example, $(X,T)$ is $\F_t$-transitive if and only if it is weakly mixing,
but if $Trans_{\F_t}(X,T)\neq\emptyset$ then $X$ must be a singleton.
\end{rem}

Similarly to $\F$-center, we can define $\F$-point center.
A system $(X,T)$ is called {\em $\F$-point center}
if for every opene subset $U$ of $X$ there exists $x\in U$
such that the entering time set $N(x,U)\in\F$.

\begin{lem}\label{lem:F-point-center}
Let $(X,T)$ be a TDS and $\F$ be a family. If $(X,T)$ is transitive and $\F$-point center, then
$Trans_{b\F}(X,T)=Trans(X,T)$.
\end{lem}
\begin{proof}
Let $x$ be a transitive point and $U$ be an opene subset of $X$.
Since $(X,T)$ is $\F$-point center, there exists $y\in U$ such that $N(y,U)\in \F$.
For every finite subset $W$ of $N(y,U)$, by the continuity of $T$, there exists $m\in\mathbb Z_+$
such that $m+W\subset N(x,U)$. Then $N(x,U)\in b\F$ and $x$ is a $b\F$-transitive point.
\end{proof}

\begin{rem}
(1). Since every recurrent point is $\F_{ip}$-recurrent, every transitive system
is $\F_{ip}$-point center. Thus, a system $(X,T)$ is transitive if and only if
it is $b\F_{ip}$-point transitive.

(2). If $(X,T)$ is $\F$-point transitive, then it is $\F$-point center,
so by Lemma \ref{lem:F-point-center} $Trans_{b\F}(X,T)=Trans(X,T)$.
In particular, if $b\F=\F$, then $(X,T)$ is $\F$-point transitive if and only if
$Trans_{\F}(X,T)=Trans(X,T)\neq\emptyset$.
\end{rem}

\section{Weakly mixing systems}
In this section, we consider weakly mixing systems. We should use the following useful lemma:
\begin{lem}[Ulam \cite{A04}]
Let $X$ be a compact metric space without isolated points and
$R$ be a  dense $G_\delta$ subset of $X\times X$.
Then there exists  a dense $G_\delta$ subset $Y$ of $X$
such that for every $x\in Y$, $R(x)=\{y\in X:\, (x,y)\in R\}$ is a dense $G_\delta$ subset of $X$.
\end{lem}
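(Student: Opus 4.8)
The plan is to treat this as a Baire category argument, essentially the Kuratowski--Ulam theorem specialized to dense $G_\delta$ sets. First I would write $R=\bigcap_{n\in\N}G_n$, where each $G_n$ is open in $X\times X$; since $R$ is dense and each $G_n\supset R$, every $G_n$ is a dense open subset of $X\times X$. For any fixed $x$, the section $G_n(x)=\{y:(x,y)\in G_n\}$ is open in $X$ (it is a section of an open set), so $R(x)=\bigcap_n G_n(x)$ is automatically a $G_\delta$ subset of $X$. Hence the whole content of the lemma is to produce a large set of $x$ for which every section $G_n(x)$ is \emph{dense}; once that is done, $R(x)$ will be a dense $G_\delta$ by the Baire category theorem, which applies since $X$ is a compact metric space and therefore a Baire space.

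To locate such $x$, fix a countable base $\{V_k\}_{k\in\N}$ of nonempty open subsets of $X$ (available because $X$ is compact metric, hence second countable). For each pair $(n,k)$ I would set $W_{n,k}=\{x\in X:\ G_n(x)\cap V_k\neq\emptyset\}$, which is exactly the image of the open set $G_n\cap(X\times V_k)$ under the coordinate projection $X\times X\to X$; since that projection is an open map, $W_{n,k}$ is open. I would then check that $W_{n,k}$ is dense: given any nonempty open $U\subset X$, the product $U\times V_k$ is a nonempty open subset of $X\times X$, so by density of $G_n$ there is a point $(x,y)\in G_n\cap(U\times V_k)$, whence $x\in U\cap W_{n,k}$.

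Finally I would set $Y=\bigcap_{n,k}W_{n,k}$. As a countable intersection of dense open sets in the Baire space $X$, $Y$ is a dense $G_\delta$. For any $x\in Y$ and any $n$, the section $G_n(x)$ meets every basic open set $V_k$, so $G_n(x)$ is dense, and being a section of an open set it is open; thus $R(x)=\bigcap_n G_n(x)$ is again a dense $G_\delta$ in $X$ by Baire. This is precisely the desired conclusion, so $Y$ is the set we seek.

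The argument is clean and I do not expect a serious obstacle; the only points requiring care are the two defining properties of $W_{n,k}$ --- that it is open (openness of the coordinate projection applied to $G_n\cap(X\times V_k)$) and that it is dense (a direct consequence of the density of $G_n$ in the product) --- together with two separate invocations of the Baire category theorem, once to make $Y$ a dense $G_\delta$ and once to make each $R(x)$ a dense $G_\delta$. I note that the hypothesis that $X$ has no isolated points is not actually needed for this derivation; it merely excludes degenerate situations and matches the standard statement of Ulam's theorem.
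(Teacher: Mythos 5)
Your proof is correct. Note, however, that the paper itself offers no proof of this lemma: it is quoted as a known result from Akin's lectures \cite{A04}, so there is no internal argument to compare yours against. What you have written is the classical Kuratowski--Ulam argument (the ``category analogue of Fubini''), and every step checks out: each $G_n$ containing the dense set $R$ is dense open; the sets $W_{n,k}$ are open because the first-coordinate projection is an open map, and dense because $G_n$ meets every product $U\times V_k$; the Baire category theorem (valid since $X$ is compact metric) is then applied twice, exactly as you indicate, to make $Y$ and each $R(x)$, $x\in Y$, dense $G_\delta$ sets. Your closing remark is also accurate: the hypothesis that $X$ has no isolated points is never used, as the Kuratowski--Ulam theorem holds for arbitrary second countable Baire spaces; in the paper that hypothesis is harmless because the lemma is only invoked for transitive systems, where the phase space automatically has no isolated points (unless it is a single periodic orbit). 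In short, your write-up is a complete, self-contained substitute for the citation.
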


Recall that a sequence $F$ in $\Z$ is called a {\em Poincar\'e sequence} if for any
measure-preserving system $(X, \mathcal B, \mu, T)$ and $A \in\mathcal B$ with $\mu(A) > 0$
there exists $0\neq n\in F$ such that $\mu(A \cap T^{-n}A)>0$.
Let $\F_{Poin}$ denote the family of all Poincar\'e sequences.
It is well known that $\F_{Poin}=\Delta^*(\F_{pubd})$ \cite{F81,W00}.

\begin{thm}\label{thm:weak-mixing}
Let $(X,T)$ be a TDS. Consider the following conditions:
\begin{enumerate}
\item $(X,T)$ is weakly mixing.
\item $Trans_{\F_{ip}}(X,T)$ is residual in $X$.
\item $(X,T)$ is $\F_{ip}$-point transitive.
\item $(X,T)$ is $\F_{Poin}$-transitive.
\end{enumerate}
Then (1)$\Rightarrow$(2)$\Rightarrow$(3)$\Rightarrow$(4).
In addition, if $(X,T)$ is an E-system, then (4)$\Rightarrow$(1).
\end{thm}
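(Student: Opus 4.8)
The plan is to read the cyclic-looking chain as three separate arguments and to treat $(1)\Rightarrow(2)$ as the real work. The step $(2)\Rightarrow(3)$ is immediate: $X$ is a compact metric space, hence a Baire space, so a residual set is dense and in particular nonempty, and any point of $Trans_{\F_{ip}}(X,T)$ witnesses $\F_{ip}$-point transitivity.

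For $(3)\Rightarrow(4)$ I would fix an $\F_{ip}$-transitive point $x$ and opene sets $U,V$. Since every family is contained in $\F_{inf}$, $N(x,U)$ is infinite, so $x$ is a transitive point and its orbit meets $U$; pick $k$ with $y:=T^kx\in U$. Recalling the Remark that $T$ carries $\F$-transitive points to $\F$-transitive points, $y$ is again $\F_{ip}$-transitive, so $N(y,V)\in\F_{ip}$. As $y\in U$, every $n\in N(y,V)$ gives $T^nU\cap V\ni T^ny\neq\emptyset$, i.e. $N(y,V)\subseteq N(U,V)$. Since every IP set is a Poincar\'e sequence (see \cite{F81}), $\F_{ip}\subseteq\F_{Poin}=\Delta^*(\F_{pubd})$; this family is hereditary upward, so $N(U,V)\supseteq N(y,V)\in\F_{Poin}$ gives $N(U,V)\in\F_{Poin}$, which is $(4)$.

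The crux is $(1)\Rightarrow(2)$, where weak mixing must be used essentially (a transitive system can have no $\F_{ip}$-transitive point at all, e.g. an irrational rotation). Fixing a basic opene set $U$, I would prove that $Y_U:=\{x:N(x,U)\in\F_{ip}\}$ is residual by describing a winning strategy for the second player in the Banach--Mazur game with target $Y_U$; this avoids the fact that $\F_{ip}$-membership is not a priori $G_\delta$. While playing, the second player builds generators $p_1<p_2<\cdots$ and nested opene sets, keeping the current set inside $V_m:=\bigcap_{\emptyset\neq\beta\subseteq\{1,\dots,m\}}T^{-s_\beta}U$ (with $s_\beta=\sum_{i\in\beta}p_i$) and keeping $W_m:=U\cap V_m$ nonempty. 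A direct computation yields $V_{m+1}=V_m\cap T^{-p_{m+1}}W_m$ and $W_{m+1}=W_m\cap T^{-p_{m+1}}W_m$, so to continue one needs a single integer $p_{m+1}$ lying in $N(A_{m+1},W_m)\cap N(W_m,W_m)$, where $A_{m+1}$ is the opene set just played. This is exactly where weak mixing enters: transitivity of $(X\times X,T\times T)$ makes $N(A_{m+1},W_m)\cap N(W_m,W_m)$ infinite, so a suitable $p_{m+1}>p_m$ exists. Driving the diameters to $0$, the resulting point $x$ satisfies $FS\{p_i\}_{i=1}^{\infty}\subseteq N(x,U)$, so $x\in Y_U$; by the Banach--Mazur theorem $Y_U$ is residual, and intersecting over a countable base (using that $\F_{ip}$ is hereditary upward) shows $Trans_{\F_{ip}}(X,T)$ is residual. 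I expect the genuine obstacle to be precisely the simultaneous satisfaction of the two hitting-time constraints at each stage; the rotation example shows the construction collapses without weak mixing.

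Finally, for $(4)\Rightarrow(1)$ under the E-system hypothesis, I would use Theorem~\ref{thm:E-M-system}(1): in an E-system every transitive point $z$ is $\F_{pubd}$-transitive. Fixing opene $A,B,C$, put $F':=N(z,A)\in\F_{pubd}$; for $a,b\in F'$ with $a>b$ one has $T^az,T^bz\in A$, so $a-b\in N(A,A)$, i.e. $F'-F'\subseteq N(A,A)$. Since $(4)$ gives $N(B,C)\in\F_{Poin}=\Delta^*(\F_{pubd})$, and such sets meet $F'-F'$ for every $F'\in\F_{pubd}$, we obtain $N(A,A)\cap N(B,C)\neq\emptyset$ for all opene $A,B,C$. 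To conclude weak mixing I would invoke the classification recorded in the table: $\Delta^*(\F_{pubd})$-transitivity is equivalent to weak disjointness from every E-system, so an E-system satisfying $(4)$ is weakly disjoint from itself and hence weakly mixing; the displayed intersection property is the concrete Furstenberg-type manifestation of this. The E-system hypothesis looks indispensable here, as it is exactly what supplies the $\F_{pubd}$-transitive point driving the difference-set computation.
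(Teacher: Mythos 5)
Your proposal is correct, but it reaches both non-trivial implications by genuinely different routes than the paper. For (1)$\Rightarrow$(2) the paper applies the Ulam lemma to $R=Trans(X\times X,T\times T)$ to get a residual set $Y$ of points $x$ whose fiber $R(x)$ is dense, and then, for an arbitrary opene $U$, picks $y\in R(x)\cap U$ and extracts the IP structure of $N(x,U)$ from the single relation $(y,y)\in\overline{Orb((x,y),T\times T)}$ via the nested pullbacks $U_{n+1}=U\cap\bigcap_{m\in FS\{p_i\}_{i=1}^n}T^{-m}U$. Your Banach--Mazur strategy does exactly the same bookkeeping (your $W_m$ are those sets), but sources each generator $p_{m+1}$ directly from transitivity of the product, i.e.\ from $N(A_{m+1},W_m)\cap N(W_m,W_m)\neq\emptyset$. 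The paper's route buys one residual set serving all $U$ at once and an Ulam-based template that is reused for the $\F_D$ characterization of weakly mixing E-systems; your route buys independence from the Ulam lemma (and from its no-isolated-points hypothesis, which the paper invokes with a slight gloss) at the cost of the Banach--Mazur--Oxtoby theorem. For (4)$\Rightarrow$(1) the paper is more self-contained than you: it feeds the full-support invariant measure and the set $V$ straight into the definition of Poincar\'e sequence to get $N(U_1,U_2)\cap N(V,V)\neq\emptyset$, then concludes weak mixing by the (implicitly cited) folklore criterion that this intersection property suffices. You derive the same intersection property legitimately via Theorem \ref{thm:E-M-system}(1) and $\F_{Poin}=\Delta^*(\F_{pubd})$, but your actual conclusion instead cites the table's classification (strong scattering $=$ $\Delta^*(\F_{pubd})$-transitivity $=$ weak disjointness from every E-system, applied to the system itself), a heavier Huang--Ye theorem; with that citation in hand your difference-set computation is, strictly speaking, redundant. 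Both endings rest on results the paper presents as known, so your argument stands, but be aware the paper's own proof needs neither $\F_{pubd}$-transitive points nor the weak-disjointness classification.
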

\begin{proof}
(1)$\Rightarrow$(2) Let $R=Trans(X\times X, T\times T)$,
then $R$ is a dense $G_\delta$ subset of $X\times X$.
Since $(X,T)$ is transitive, $X$ has no isolated points.
By Ulam Lemma, there exists a dense $G_\delta$ subset $Y$ of $X$
such that for every $x\in Y$, $R(x)$ is a dense $G_\delta$ subset of $X$.
Then it suffices to show that $Y\subset Trans_{\F_{ip}}(X,T)$.

Let $x\in Y$ and $y\in R(x)$, we have $(y,y)\in \overline{Orb((x,y), T\times T)}$,
then by the following claim, $x$ is an $\F_{ip}$-transitive point.

\smallskip
{\bf Claim}: If $(y,y)\in \overline{Orb((x,y), T\times T)}$,
then for every neighborhood $U$ of $y$ the entering time set $N(x,U)$ is an IP set.

{\bf Proof of the Claim}:
For every neighborhood $U$ of $y$, let $U_1=U$
then there exists $p_1\in\mathbb N$ such that
$T^{p_1}x\in U_1$ and $T^{p_1}y\in U_1$. Let $U_2=U_1 \bigcap T^{-p_1}U_1$,
then $U_2$ is a neighborhood of $y$, so there exists $p_2\in\mathbb N$ such that
\[T^{p_2}x\in U_2\textrm{ and } T^{p_2}y\in U_2. \]
Then for every $m\in FS\{ p_i\}_{i=1}^2$
\[T^mx\in U \textrm{ and } T^my\in U.\]
We continue inductively. Assume $p_1,p_2,\ldots,p_n$ have been found such that
for every $m\in FS\{ p_i\}_{i=1}^n$
\[T^mx\in U \textrm{ and } T^my\in U.\]
Let $U_{n+1}=U \bigcap (\bigcap_{m\in FS\{ p_i\}_{i=1}^n} T^{-m}U)$, then $U_{n+1}$ is a
neighborhood of $y$, so there exists $p_{m+1}\in\mathbb N$ such that
\[T^{p_{m+1}}x\in U_{m+1}\textrm{ and } T^{p_{m+1}}y\in U_{m+1}. \]
Then for every $m\in FS\{ p_i\}_{i=1}^{n+1}$
\[T^mx\in U \textrm{ and } T^my\in U.\]
Thus, $FS\{p_i\}_{i=1}^\infty \subset N(x,U)$.

(2)$\Rightarrow$(3) is obvious.

(3)$\Rightarrow$(4) follows from the fact that every IP set is a Poincar\'e sequence (\cite[p74]{F81}).

In addition, if  $(X,T)$ is an E-system, we show that (4)$\Rightarrow$(1).
It is sufficient to show that for every opene subsets $U_1, U_2, V$ of $X$,
$N(U_1, U_2)\cap N(V,V)\neq\emptyset$.
Since $(X,T)$ is an E-system,  there exists an invariant measure $\mu$ with full support.
Then $(X,\mathcal B_X,\mu, T)$ is a measure dynamical system and $\mu(V)>0$.
Since $N(U_1,U_2)\in \F_{Poin}$, by the definition of Poincar\'e sequence one has
 $N(U_1,U_2)\cap N(V,V)\neq\emptyset$.
\end{proof}

\begin{cor}
Let $(X,T)$ be a minimal system. Then $(X,T)$ is weakly mixing if and only if it is $\F_{ip}$-point transitive.
\end{cor}

In \cite{HY02}, the authors constructed an $\F_{Poin}$-transitive system which is not weakly mixing,
we show that:

\begin{prop}\label{prop:Fip-not-wm}
There exists an $\F_{ip}$-point transitive system which is not weakly mixing.
\end{prop}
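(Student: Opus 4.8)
The goal is to exhibit a system that is $\F_{ip}$-point transitive but not weakly mixing. By Theorem~\ref{thm:weak-mixing}, weak mixing implies $\F_{ip}$-point transitivity, so to separate the two notions I need a transitive system possessing a point $x$ whose entering time set $N(x,U)$ is an IP set for every opene $U$, yet for which the product $(X\times X,T\times T)$ fails to be transitive. The plan is to build the example directly inside a symbolic or skew-product setting where IP-structure of return times can be engineered by hand while weak mixing is obstructed by an invariant that survives in the product.

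The approach I would take is to start from a simple weakly mixing (or merely minimally structured) building block and adjoin enough to destroy weak mixing without destroying the IP-transitivity of a single orbit. Concretely, I would consider a product-type or extension construction: take a minimal weakly mixing system $(Y,S)$ whose transitive point $y$ gives IP entering-time sets (guaranteed by the Claim in Theorem~\ref{thm:weak-mixing}), and couple it with a second coordinate carrying a nontrivial equicontinuous or periodic factor so that the resulting system $(X,T)$ is still transitive but its factor obstructs weak mixing. The first step is to verify transitivity of $X$ and locate a transitive point $x$. The second step, which is the crux, is to show $N(x,U)$ is an IP set for every opene $U$; here I would lean on the fact that IP sets are closed under the finite-sums machinery used in the Claim, and that the weakly mixing coordinate already delivers IP return times which I then intersect/refine against the controlled second coordinate.

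The main obstacle will be reconciling two competing demands: IP-transitivity is a strong recurrence-plus-density property of \emph{one} orbit, so the single coordinate controlling return times must behave like a weakly mixing orbit, while the \emph{global} failure of weak mixing forces a rigid factor that normally kills such rich return-time structure. The delicate part is arranging that the rigid factor restricts \emph{which} opene sets $U$ one tests against without ever shrinking $N(x,U)$ below an IP set; in a skew product this means the fibre dynamics must supply IP return times uniformly over the base, so that projecting and lifting preserves the finite-sums closure. I expect the verification that $N(x,U)\in\F_{ip}$ for \emph{every} $U$ (not just a basis) to require the same inductive finite-sums argument as in the Claim, applied fibrewise.

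Finally I would confirm non-weak-mixing by exhibiting two opene sets $U,V$ with $N(U,V)$ not thick, equivalently producing a nontrivial factor map onto a system that is not weakly mixing (e.g.\@ onto a nontrivial equicontinuous or finite system), since weak mixing passes to factors and no nontrivial equicontinuous system is weakly mixing. This furnishes the clean obstruction: the factor witnesses $N(U,V)\notin\F_t$, so by Lemma the system is not $\F_t$-transitive and hence not weakly mixing, completing the separation.
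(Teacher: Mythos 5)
Your plan has a fatal structural flaw: the obstruction you propose to weak mixing --- coupling with a coordinate that yields a nontrivial periodic or equicontinuous factor --- is provably incompatible with the property you must preserve. Suppose $\pi\colon (X,T)\to (Z,R)$ is a factor map onto a nontrivial minimal rotation $Rz=z+\alpha$ of a compact abelian group (this covers both your periodic and equicontinuous cases), let $x\in X$ be arbitrary and $z_0=\pi(x)$. Choose a symmetric neighborhood $B$ of the identity of $Z$ and an opene $I\subset Z$ with $I\cap(z_0+B)=\emptyset$, and put $V=\pi^{-1}(I)$. Every $n\in N(x,V)$ then satisfies $n\alpha\notin B$. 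But the set $\{n\in\N:\ n\alpha\in B\}$ meets every IP set, i.e.\ it lies in $\kappa\F_{ip}$: given $FS\{p_i\}_{i=1}^\infty$, cover $Z$ by finitely many translates of a symmetric neighborhood $B'$ with $B'-B'\subset B$; infinitely many partial sums $s_j=(p_1+\cdots+p_j)\alpha$ lie in one translate, and for $m<l$ in that translate, $s_l-s_m=(p_{m+1}+\cdots+p_l)\alpha\in B$ with $p_{m+1}+\cdots+p_l\in FS\{p_i\}_{i=1}^\infty$. (In the cyclic case $Z=\mathbb Z/k\mathbb Z$ this is plain pigeonhole: every IP set contains a multiple of $k$, so no IP set lies in a nonzero residue class.) Hence $N(x,V)\notin\F_{ip}$, so no point of $X$ is $\F_{ip}$-transitive. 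In other words, every $\F_{ip}$-point transitive system has trivial maximal equicontinuous factor (consistently, by Theorem~\ref{thm:weak-mixing} it is $\F_{Poin}$-transitive, hence totally transitive). Your closing ``equivalently'' is also false in the direction you need: non-thickness of some $N(U,V)$ does \emph{not} yield a factor onto a nontrivial equicontinuous or finite system --- the very example being sought is a witness that these are different phenomena. So the proposal contains no workable construction: its central mechanism cannot produce the example.

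The paper's proof avoids factor-type obstructions entirely and is purely combinatorial: it builds a subshift $X=\overline{Orb(x,T)}$ with $x=\mathbf 1_W\in\{0,1\}^{\Z}$, where $W=\bigcup_{n}W_n$ is assembled recursively so that (i) for each basic neighborhood $[W_n]$ of $x$, the entering time set $N(x,[W_n])$ contains translated IP sets $P^{(n)}_i-i$, and every opene $U$ contains some $T^k([W_n])$, whence $x$ is an $\F_{ip}$-transitive point; and (ii) any two distinct elements of $W_n-W_n$ differ by more than $2$, so $N([1],[1])=W-W$ is not thick, and $(X,T)$ is not weakly mixing by Furstenberg's characterization of weak mixing as $\F_t$-transitivity. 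Any correct proof must, as here, defeat weak mixing by an obstruction of this non-eigenvalue, combinatorial type rather than by a rigid factor.
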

\begin{proof}
Since the construction is somewhat long and complicated, we leave it to the appendix.
\end{proof}

\subsection{Weakly mixing M-system} In this subsection, we characterize weakly mixing M-systems.
To this end, we need the concept of the central set which was first introduced in \cite{F81}.

Let $(X,T)$ be a TDS, a pair $(x,y)\in X\times X$ is called {\em proximal} if there exists a
sequence $\{n_i\}_{i=1}^\infty$ in $\N$ such that $\lim_{i\to\infty}T^{n_i}x=\lim_{i\to\infty}T^{n_i}y$.
A subset $F$ of $\Z$ is called a {\em central set}, if there exists a system
$(X,T)$, $x\in X$, a minimal point $y\in X$ and a neighborhood of $U$ of $y$
such that $(x,y)$ is proximal and $N(x,U)\subset F$.
Let $\F_{cen}$ denote the family of all central sets.

\begin{lem}\cite[Proposition 8.10]{F81}
$\F_{cen}\subset \F_{ip}\bigcap \F_{ps}$.
\end{lem}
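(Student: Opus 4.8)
The plan is to prove the stronger statement that for the distinguished data witnessing that $F$ is a central set --- a system $(X,T)$, a point $x$, a minimal point $y$, a neighborhood $U$ of $y$ with $(x,y)$ proximal and $N(x,U)\subset F$ --- the set $N(x,U)$ itself lies in $\F_{ip}\cap\F_{ps}$. Since $N(x,U)\subset F$ and both $\F_{ip}$ and $\F_{ps}$ are hereditary upward, this at once gives $F\in\F_{ip}\cap\F_{ps}$, hence $\F_{cen}\subset\F_{ip}\cap\F_{ps}$.

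The key preliminary step, from which both memberships flow, is to extract a diagonal point from proximality and then upgrade it using the minimality of $y$. By proximality there is a sequence $n_i\to\infty$ with $d(T^{n_i}x,T^{n_i}y)\to 0$; passing to a subsequence and using compactness we may assume $T^{n_i}x\to z$ and $T^{n_i}y\to z$ for a common limit $z$, so that $(z,z)\in\overline{Orb((x,y),T\times T)}$. Write $Y_0=\overline{Orb(y,T)}$; as $y$ is a minimal point, $Y_0$ is a minimal set, so $z\in Y_0$ forces $\overline{Orb(z,T)}=Y_0\ni y$. Projecting $(z,z)$ to the first coordinate gives $z\in\overline{Orb(x,T)}$, whence $y\in\overline{Orb(z,T)}\subset\overline{Orb(x,T)}$. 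Moreover $\overline{Orb((x,y),T\times T)}$ is closed and invariant and contains $(z,z)$, so it contains $\overline{Orb((z,z),T\times T)}=\{(w,w):w\in Y_0\}\ni(y,y)$; thus $(y,y)\in\overline{Orb((x,y),T\times T)}$. The IP part is now immediate: invoking the Claim established within the proof of Theorem~\ref{thm:weak-mixing}, the relation $(y,y)\in\overline{Orb((x,y),T\times T)}$ gives that $N(x,U)$ is an IP set, i.e.\ $N(x,U)\in\F_{ip}$.

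For the piecewise syndetic part I would run a shadowing argument based on $y\in\overline{Orb(x,T)}$ together with the fact that the minimal point $y$ is $\F_s$-recurrent. Fix $\epsilon>0$ with $B(y,\epsilon)\subset U$ and put $F'=N(y,B(y,\epsilon/2))$, which is syndetic, so $F'\in\F_s$. For each $n\in\N$, uniform continuity of the finitely many maps $T^0,\dots,T^n$ yields $\gamma>0$ such that $d(a,b)<\gamma$ implies $d(T^ja,T^jb)<\epsilon/2$ for all $0\le j\le n$; choosing $a_n$ with $d(T^{a_n}x,y)<\gamma$ (possible since $y\in\overline{Orb(x,T)}$), one gets for every $j\in F'\cap[0,n]$ that $d(T^{a_n+j}x,y)\le d(T^{a_n+j}x,T^jy)+d(T^jy,y)<\epsilon$, so $a_n+j\in N(x,U)$. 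Hence $a_n+(F'\cap[0,n])\subset N(x,U)$ for every $n$, which is exactly the assertion $N(x,U)\in b\F_s=\F_{ps}$.

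The only genuinely delicate point is the upgrade in the second paragraph: proximality by itself supplies merely a single diagonal limit $(z,z)$, and it is the minimality of $y$ that promotes this to the two facts $y\in\overline{Orb(x,T)}$ and $(y,y)\in\overline{Orb((x,y),T\times T)}$ carrying the whole argument. Once these are in hand, the IP conclusion is read off directly from the earlier Claim and the piecewise syndetic conclusion is a routine shadowing estimate; note that proximality is essential precisely for the IP part, whereas the piecewise syndetic part only uses that $\overline{Orb(x,T)}$ meets the minimal set $Y_0$.
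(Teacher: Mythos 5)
Your proof is correct, but note that the paper itself does not prove this lemma at all: it is quoted verbatim from \cite[Proposition 8.10]{F81}, so any written-out argument is necessarily a different route from the paper's. Your argument is essentially the classical one, and it has the merit of being assembled entirely from tools the paper already provides. The promotion of proximality to the two facts $y\in\overline{Orb(x,T)}$ and $(y,y)\in\overline{Orb((x,y),T\times T)}$ is sound: the common limit $z$ of $T^{n_i}x$ and $T^{n_i}y$ lies in the minimal set $Y_0=\overline{Orb(y,T)}$, and $\overline{Orb((z,z),T\times T)}$ equals the diagonal $\{(w,w):w\in Y_0\}$, which contains $(y,y)$. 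Invoking the Claim inside the proof of Theorem~\ref{thm:weak-mixing} for the IP half is legitimate, since that Claim is stated and proved for arbitrary pairs satisfying the orbit-closure hypothesis; one fine point worth making explicit is that its proof needs $(y,y)$ to be approximable by \emph{positive-time} orbit points $T^p(x,y)$, $p\in\N$, and this does hold in your setting because the proximality times $n_i$ lie in $\N$, so in fact $(y,y)\in\{(w,w):w\in Y_0\}\subset\overline{\{(T^nx,T^ny):n\ge 1\}}$. The piecewise syndetic half is also correct: a minimal point is $\F_s$-recurrent (the first lemma of Section 2), your shadowing estimate produces $a_n+(F'\cap[0,n])\subset N(x,U)$ for every $n$ with $F'$ syndetic, and this is precisely the paper's definition of membership in $b\F_s=\F_{ps}$; finally, since $\F_{ip}$ and $\F_{ps}$ are hereditary upward, $N(x,U)\subset F$ gives $F\in\F_{ip}\bigcap\F_{ps}$. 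What your route buys is self-containedness --- the external citation could be replaced by an argument using only results already established in the paper; what the citation buys is brevity and deference to the original source.
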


\begin{thm}[Akin-Kolyada \cite{AK03}]
Let $(X,T)$ be a TDS. If $(X,T)$ is weakly mixing, then for every $x\in X$
the proximal cell $Prox(x)=\{y\in X: (x,y)\textrm{ is proximal}\}$
is a dense $G_\delta$ subset of $X$.
\end{thm}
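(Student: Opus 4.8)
The plan is to separate the two assertions---that $Prox(x)$ is $G_\delta$ and that it is dense---since only the second uses weak mixing. For the first, observe that for \emph{any} TDS and any $x$,
\[
Prox(x)=\bigcap_{m\ge 1}\ \bigcup_{n\ge 0}\ \{y\in X:\ d(T^nx,T^ny)<1/m\},
\]
and each inner set is open by continuity of $T^n$; hence $Prox(x)$ is a $G_\delta$ set with no hypothesis on $(X,T)$. By the Baire category theorem it then suffices to prove that for each $m$ the open set $G_m(x):=\bigcup_{n}\{y:d(T^nx,T^ny)<1/m\}$ is dense, equivalently the following single-step statement: for every opene $V$ and every $\epsilon>0$ there exist $n\ge 0$ and $y\in V$ with $d(T^nx,T^ny)<\epsilon$.

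The weak-mixing input I would first exploit is the product form of transitivity. Since $(X\times X,T\times T)$ is transitive, for any opene $U,V\subset X$ and any ball $B$ of radius $<\epsilon/2$ one has $N_{T\times T}(U\times V,\,B\times B)\neq\emptyset$, which unwinds to the existence of $n$, $u\in U$, $v\in V$ with $d(T^nu,T^nv)<\epsilon$. Running this over $\epsilon=1/m$ shows immediately that the proximal relation $Prox=\{(x,y):(x,y)\text{ proximal}\}$ is a dense $G_\delta$ subset of $X\times X$. Applying the Ulam Lemma quoted above to $R=Prox$ (noting that a transitive $X$ has no isolated points) then yields a residual set of base points $x$ for which $Prox(x)$ is a dense $G_\delta$. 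This already proves the theorem for a residual set of $x$.

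The hard part will be upgrading this to \emph{every} $x\in X$: the product argument and the Ulam Lemma only control opene sets and a residual set of sections, whereas here the base point is fixed and possibly far from recurrent, so the times at which its orbit visits a prescribed small ball can form a sparse (merely infinite) set which a thick hitting-time set $N(V,B)$ need not meet---this is exactly why a naive "match $x$'s orbit to a fixed ball" attempt fails. To remove this difficulty I would bring in the enveloping-semigroup/idempotent structure: choose a minimal idempotent $u$ in the Ellis semigroup $E(X,T)$, so that $z:=ux$ is a minimal point, $(x,z)$ is proximal, and $Prox(x)\supseteq\{y:uy=ux\}$, because $uy=ux$ already forces $(x,y)$ proximal. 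It then remains to show this fibre is dense, and here the two halves cooperate: minimality of $z$ makes the return sets $N(z,W)$ syndetic for opene $W\ni z$, while weak mixing makes $N(V,W)$ thick, and a syndetic set meets every thick set. Aligning these return times with membership in the idempotent ultrafilter---so as to produce not merely one close encounter but a genuine proximal point $y\in V$ via a nested-set (Cauchy) construction---is the technical heart of the argument; securing the single-step statement for the fixed $x$ in this way completes the density, and hence the proof.
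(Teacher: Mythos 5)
Your $G_\delta$ argument and the residual-set-of-base-points argument (product transitivity plus the Ulam Lemma) are both correct, but together they prove a strictly weaker statement than the theorem, which asserts density of $Prox(x)$ for \emph{every} $x\in X$; note the paper itself offers no proof (it quotes the result from Akin--Kolyada), so everything hinges on whether your plan for the remaining step closes the gap, and it does not. The Ellis-semigroup sketch has two concrete problems. First, reducing to density of the single fibre $\{y: uy=ux\}$ is an unjustified strengthening: in general $Prox(x)$ is the \emph{union} of such fibres over all minimal idempotents, and there is no reason a single fibre should be dense. Second, the ``syndetic meets thick'' step reproduces exactly the alignment problem you yourself flagged as the reason the naive approach fails: intersecting the syndetic set $N(z,W)$ with the thick set $N(V,W)$ produces a time $n$ at which some $y\in V$ has $T^ny$ close to $T^nz$, but the proximality of $(x,z)$ is witnessed along a different, uncontrolled sequence of times, so you obtain no control of $d(T^nx,T^ny)$; the ``nested-set (Cauchy) construction'' that is supposed to repair this is precisely the part that is missing, i.e.\ the proof stops exactly where the difficulty begins.

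The gap can be closed much more simply, for every $x$ at once, using Furstenberg's theorem that weak mixing implies transitivity of all finite products $(X^k,T^{(k)})$ (a fact this paper itself invokes elsewhere). Fix $x$, an opene $V$ and $\epsilon>0$, and cover $X$ by finitely many open balls $B_1,\dots,B_k$ of radius $\epsilon/2$. Then
$\bigcap_{i=1}^k N(V,B_i)=N_{T^{(k)}}(V\times\cdots\times V,\,B_1\times\cdots\times B_k)\neq\emptyset$
by transitivity of $(X^k,T^{(k)})$. Pick $n$ in this intersection; $T^nx$ lies in some $B_{i_0}$, and since $n\in N(V,B_{i_0})$ there exists $y\in V$ with $T^ny\in B_{i_0}$, hence $d(T^nx,T^ny)<\epsilon$. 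This establishes your single-step statement for the fixed, arbitrary $x$ --- the point being that a \emph{common} hitting time for all the balls works no matter where $T^nx$ happens to land --- and with it the density of each $G_m(x)$, completing the proof without any recourse to the enveloping semigroup.
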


\begin{thm}
Let $(X,T)$ be a TDS. Then the following conditions are equivalent:
\begin{enumerate}
\item $(X,T)$ is a weakly mixing M-system.
\item $Trans_{\F_{cen}}(X,T)$ is residual in $X$.
\item $(X,T)$ is $\F_{cen}$-point transitive.
\end{enumerate}
\end{thm}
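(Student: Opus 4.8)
The plan is to establish the cycle (1)$\Rightarrow$(2)$\Rightarrow$(3)$\Rightarrow$(1), paralleling the proof of Theorem \ref{thm:weak-mixing}. The implication (2)$\Rightarrow$(3) is immediate: a residual subset of the compact metric space $X$ is dense, hence nonempty, so an $\F_{cen}$-transitive point exists.

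For (1)$\Rightarrow$(2) the guiding observation is that a central set of $(X,T)$ may be witnessed inside $(X,T)$ itself. Indeed, to check that $N(x,V)\in\F_{cen}$ for an opene set $V$, it suffices to find a minimal point $y\in V$ with $(x,y)$ proximal: then $V$ is a neighborhood of $y$, and $N(x,V)\subset N(x,V)$ exhibits $N(x,V)$ as a central set directly from the definition. To obtain such $y$ uniformly in $V$, I would fix a countable set $\{y_k\}_{k\ge1}$ that is dense in the set of minimal points (which is dense in $X$ because $(X,T)$ is an M-system), so that $\{y_k\}$ is dense in $X$. Since $(X,T)$ is weakly mixing, the Akin-Kolyada theorem shows each proximal cell $Prox(y_k)$ is a dense $G_\delta$ subset of $X$; hence $Y=\bigcap_{k}Prox(y_k)$ is residual. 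For $x\in Y$ and any opene $V$, density of $\{y_k\}$ provides some $y_k\in V$, and $(x,y_k)$ is proximal since $x\in Prox(y_k)$; thus $N(x,V)$ is central. Therefore $Y\subset Trans_{\F_{cen}}(X,T)$, and the latter is residual.

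For (3)$\Rightarrow$(1), let $x$ be an $\F_{cen}$-transitive point. The inclusion $\F_{cen}\subset\F_{ip}\cap\F_{ps}$ makes $x$ simultaneously an $\F_{ps}$-transitive and an $\F_{ip}$-transitive point; by Theorem \ref{thm:E-M-system}(2) the system is then an M-system, and it is also $\F_{ip}$-point transitive. I would next use the standard fact that every M-system is an E-system: choosing a countable dense family of minimal points and normalizing the sum of the invariant measures supported on their minimal orbit closures produces an invariant measure of full support. Finally, by Theorem \ref{thm:weak-mixing} the $\F_{ip}$-point transitivity yields $\F_{Poin}$-transitivity, and for an E-system the supplementary implication of that theorem upgrades this to weak mixing. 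Together with the M-system property this gives (1).

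The main obstacle is the direction (1)$\Rightarrow$(2), and within it the realization that the central-set condition need not call on an auxiliary system: everything can be kept inside $(X,T)$, so the task reduces to locating, in each target open set, a minimal point proximal to $x$. Once this is seen, weak mixing enters precisely through the Akin-Kolyada theorem (supplying residually many admissible $x$), and the density of minimal points supplies the target points; reconciling these two density statements through a single countable dense set of minimal points is the only delicate bookkeeping.
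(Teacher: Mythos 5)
Your proposal is correct and follows essentially the same route as the paper: (1)$\Rightarrow$(2) via a countable dense set of minimal points, the Akin--Kolyada theorem, and witnessing centrality of $N(x,V)$ inside $(X,T)$ itself; (3)$\Rightarrow$(1) via $\F_{cen}\subset\F_{ip}\cap\F_{ps}$ together with Theorem \ref{thm:E-M-system} and Theorem \ref{thm:weak-mixing}. The only difference is that you spell out the step that an M-system is an E-system (needed to invoke the (4)$\Rightarrow$(1) part of Theorem \ref{thm:weak-mixing}), which the paper leaves implicit.
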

\begin{proof}
(1)$\Rightarrow$(2). Let $\{y_n\}_{n=1}^\infty$ be a  sequence of minimal points which is dense in $X$.
By Akin-Kolyada Theorem, $\bigcap_{n=1}^\infty Prox(y_n)$ is a dense $G_\delta$ subset of $X$.
Then it suffices to show that $\bigcap_{n=1}^\infty Prox(y_n)\subset Trans_{\F_{cen}}(X,T) $.

Let $x\in \bigcap_{n=1}^\infty Prox(y_n)$ and $U$ be an opene subset of $X$.
There exists $n\in \mathbb N$ such that $y_n\in U$.
Since $(x,y)$ is proximal and $y$ is a minimal point,
by the definition of central set, we have $N(x, U)\in \F_{cen}$.
Thus, $x$ is a $\F_{cen}$-transitive point.

(2)$\Rightarrow$(3) is obvious.

(3)$\Rightarrow$(1).
By $\F_{cen}\subset \F_{ps}$ and Theorem \ref{thm:E-M-system}, $(X,T)$ is an M-system.
Therefore, by Theorem \ref{thm:weak-mixing} and $\F_{cen}\subset \F_{ip}$, $(X,T)$ is weakly mixing.
\end{proof}

\subsection{Weakly mixing E-system} In this subsection, we characterize weakly mixing E-systems.
To this end, we need the concept of the D-set which was first introduced in \cite{BD08}.

A subset $F$ of $\Z$ is called a {\em D-set} if there exists a system
$(X,T)$, $x\in X$, an $\F_{pubd}$-recurrent point $y\in X$ and a neighborhood of $U$ of $y$
such that $(y,y)\in \overline{Orb((x,y),T\times T)}$ and $N((x,y), U\times U)\subset F$.
Let $\F_D$ denote the family of all D-sets.

\begin{lem}[\cite{BD08}]
$\F_D\subset \F_{ip}\bigcap \F_{pubd}$.
\end{lem}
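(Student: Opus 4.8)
The plan is to unwind the definition of a D-set and verify the two memberships separately, exploiting that both $\F_{ip}$ and $\F_{pubd}$ are hereditary upward. Fix a witness for $F\in\F_D$: a system $(X,T)$, a point $x$, an $\F_{pubd}$-recurrent point $y$, and a neighborhood $U$ of $y$ with $(y,y)\in\overline{Orb((x,y),T\times T)}$ and $N((x,y),U\times U)\subset F$. Since both families are hereditary upward, it suffices to prove $N((x,y),U\times U)\in\F_{ip}$ and $N((x,y),U\times U)\in\F_{pubd}$, and then $F$ inherits both memberships.

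For the IP part, I observe that the hypothesis $(y,y)\in\overline{Orb((x,y),T\times T)}$ is exactly the hypothesis of the Claim in the proof of Theorem~\ref{thm:weak-mixing}. That Claim is proved by inductively choosing $p_1,p_2,\dots$ so that $T^{p_i}x$ and $T^{p_i}y$ both lie in a suitable shrinking family of neighborhoods of $y$; the construction in fact produces $FS\{p_i\}_{i=1}^\infty\subset N(x,U)\cap N(y,U)=N((x,y),U\times U)$. Hence $N((x,y),U\times U)$ is an IP set, giving the inclusion $F\in\F_{ip}$ essentially for free.

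For the positive-upper-Banach-density part, the idea is to transfer the density guaranteed by the $\F_{pubd}$-recurrence of $y$ along the orbit of the pair $(x,y)$ by a shadowing argument. First choose a neighborhood $U_0$ of $y$ with $\overline{U_0}\subset U$; since $y$ is $\F_{pubd}$-recurrent, $c:=BD^*(N(y,U_0))>0$, so there exist arbitrarily long intervals $J=[b,b+L]$ with $|N(y,U_0)\cap J|\ge (c/2)(L+1)$. For such an interval set $z=T^b y$, so that the diagonal point $(z,z)=(T\times T)^b(y,y)$ again lies in $\overline{Orb((x,y),T\times T)}$ by forward invariance of the orbit closure. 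Using uniform continuity of the finitely many maps $T^0,\dots,T^L$ together with the choice $\overline{U_0}\subset U$, I can fix $\delta>0$ so small that whenever $(T\times T)^m(x,y)$ is within $\delta$ of $(z,z)$, every $n\in[0,L]$ with $T^n z\in U_0$ forces $T^{m+n}x\in U$ and $T^{m+n}y\in U$. Choosing such an $m$, which is possible since $(z,z)$ is in the orbit closure of $(x,y)$, yields $|N((x,y),U\times U)\cap[m,m+L]|\ge (c/2)(L+1)$; letting $L\to\infty$ gives $BD^*(N((x,y),U\times U))\ge c/2>0$, hence $F\in\F_{pubd}$.

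The main obstacle is the density-transfer step: one must make the shadowing uniform over an interval whose length $L$ tends to infinity, while the approximation quality $\delta$ is chosen only after $L$ is fixed. The care needed is to separate $\overline{U_0}$ from $X\setminus U$, so that sufficiently close orbits preserve membership in $U$, and to exploit the forward invariance of $\overline{Orb((x,y),T\times T)}$ to realize every shifted diagonal point $(z,z)=(T\times T)^b(y,y)$ as a limit of the pair-orbit, guaranteeing a suitable $m$ for each interval. Once this is arranged, the IP inclusion and the density inclusion combine to give $\F_D\subset\F_{ip}\cap\F_{pubd}$.
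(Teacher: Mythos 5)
Your proof is correct, but note that there is no proof in the paper to compare against: the lemma is stated as a quotation from Bergelson--Downarowicz \cite{BD08}. In that reference, D-sets are defined combinatorially (as members of idempotent ultrafilters all of whose elements have positive upper Banach density), and the inclusion $\F_D\subset\F_{ip}\bigcap\F_{pubd}$ comes out of that ultrafilter machinery, the dynamical description used as the definition in this paper being there a theorem. Your argument goes the opposite way: it is a self-contained dynamical proof working directly from the paper's definition of a D-set, using the identification $N((x,y),U\times U)=N(x,U)\cap N(y,U)$ and upward heredity of both families. The IP half is a legitimate reuse of the Claim inside the proof of Theorem \ref{thm:weak-mixing}: the construction there produces $p_1,p_2,\ldots$ with $FS\{p_i\}_{i=1}^{\infty}\subset N(x,U)\cap N(y,U)$, which is exactly what you need (the degenerate case where $(y,y)$ is an isolated point of the orbit closure is excluded here because $y$ is $\F_{pubd}$-recurrent, hence recurrent). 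The density half is also sound: with $U_0\ni y$ chosen so that $\overline{U_0}\subset U$ and $c:=BD^*(N(y,U_0))>0$, the distance $\epsilon$ between the compact set $\overline{U_0}$ and the closed set $X\setminus U$ is positive, uniform continuity of $T^0,\ldots,T^L$ yields a $\delta$ depending only on $L$ and $\epsilon$, and forward invariance of $\overline{Orb((x,y),T\times T)}$ guarantees some $m$ with $(T^mx,T^my)$ within $\delta$ of $(T^by,T^by)$; every $n\in[0,L]$ with $T^{b+n}y\in U_0$ then gives $m+n\in N((x,y),U\times U)$, so $BD^*\bigl(N((x,y),U\times U)\bigr)\ge c/2>0$. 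What your route buys is a short, purely dynamical proof that could replace the citation and stays entirely inside the paper's framework; what the citation buys is consistency with \cite{BD08}, where the combinatorial definition of D-sets is primary and relating it to the dynamical one requires separate work.
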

\begin{thm}
Let $(X,T)$ be a TDS. Then the following conditions are equivalent:
\begin{enumerate}
\item $(X,T)$ is a weakly mixing E-system.
\item $Trans_{\F_{D}}(X,T)$ is residual in $X$.
\item $(X,T)$ is $\F_{D}$-point transitive.
\end{enumerate}
\end{thm}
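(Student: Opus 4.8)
The plan is to mirror the structure of the weakly mixing M-system theorem proved just above, replacing the pair ``minimal point / proximal'' with ``$\F_{pubd}$-recurrent point / proximal in the product sense'', and replacing $\F_{cen}$ with $\F_D$. The three implications to establish are (1)$\Rightarrow$(2)$\Rightarrow$(3)$\Rightarrow$(1), and the cyclic shape means the bulk of the work is in (1)$\Rightarrow$(2); the step (2)$\Rightarrow$(3) is immediate since a residual set is in particular nonempty.

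For (1)$\Rightarrow$(2), suppose $(X,T)$ is a weakly mixing E-system. Being an E-system, it is $\F_{pubd}$-point transitive by Theorem \ref{thm:E-M-system}(1), and moreover $Trans_{\F_{pubd}}(X,T)=Trans(X,T)$ is a dense $G_\delta$; so the set of $\F_{pubd}$-recurrent transitive points is residual, and I can fix a countable dense collection $\{y_n\}_{n=1}^\infty$ of such $\F_{pubd}$-recurrent points. The goal is to produce a residual set of $x$ that are $\F_D$-transitive. The natural candidate is $\bigcap_{n} R_n$, where $R_n=\{x:\ (y_n,y_n)\in\overline{Orb((x,y_n),T\times T)}\}$. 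Here the key input is that the system is weakly mixing, so $(X\times X,T\times T)$ is transitive and $X\times X$ has no isolated points; applying the Ulam Lemma to $R=Trans(X\times X,T\times T)$ I get a residual set $Y$ of $x$ for which $R(x)$ is dense $G_\delta$, and for such $x$ and any $z\in R(x)$ one has $(z,z)\in\overline{Orb((x,z),T\times T)}$ by the Claim inside the proof of Theorem \ref{thm:weak-mixing}. To force $z=y_n$ I intersect further: I would show that each $R_n$ is residual (not merely that some diagonal point is hit), which follows because the condition $(y_n,y_n)\in\overline{Orb((x,y_n),T\times T)}$ amounts to $x$ entering, for every neighborhood $V$ of $y_n$, the opene set $V\cap T^{-m}V$ along a common return time of $y_n$ — and weak mixing together with $\F_{pubd}$-recurrence of $y_n$ makes this a dense $G_\delta$ condition on $x$.

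Once $x\in\bigcap_n R_n\cap Y$ is fixed, I verify it is $\F_D$-transitive. Let $U$ be opene; choose $n$ with $y_n\in U$. Then $y_n$ is $\F_{pubd}$-recurrent, $(y_n,y_n)\in\overline{Orb((x,y_n),T\times T)}$, and $N((x,y_n),U\times U)\subset N(x,U)$, so by the very definition of a D-set $N(x,U)\in\F_D$. Hence $x\in Trans_{\F_D}(X,T)$, giving residuality. For (3)$\Rightarrow$(1) I argue exactly as in the weakly mixing M-system case: by the inclusion $\F_D\subset\F_{pubd}$ (the Lemma just above) an $\F_D$-transitive point is $\F_{pubd}$-transitive, so Theorem \ref{thm:E-M-system}(1) makes $(X,T)$ an E-system; and by $\F_D\subset\F_{ip}$ the same point is $\F_{ip}$-transitive, so $(X,T)$ is $\F_{ip}$-point transitive, whence weakly mixing by the E-system half of Theorem \ref{thm:weak-mixing}, namely the implication (4)$\Rightarrow$(1) reached through (3)$\Rightarrow$(4).

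The main obstacle is the residuality bookkeeping in (1)$\Rightarrow$(2): unlike the M-system argument, where the Akin--Kolyada theorem hands over $Prox(y_n)$ as dense $G_\delta$ directly, here I must manufacture the diagonal-recurrence condition $(y_n,y_n)\in\overline{Orb((x,y_n),T\times T)}$ from weak mixing and the $\F_{pubd}$-recurrence of $y_n$, and confirm it defines a residual set of $x$. I expect this to reduce to showing that for each fixed neighborhood $V$ of $y_n$ and each $k$ the set $\{x:\ \exists m\ \text{with}\ T^m x\in V,\ T^m y_n\in V,\ m\ \text{large}\}$ is dense open, using that $N(y_n,V)$ has positive upper Banach density and that weak mixing lets me hit $V$ with $x$ along a thick set intersecting this density set; the careful statement and the interchange of the quantifiers over the countable family $\{y_n\}$ is where the proof will need the most attention.
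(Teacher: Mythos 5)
Your overall architecture (1)$\Rightarrow$(2)$\Rightarrow$(3)$\Rightarrow$(1), the final verification step (given $x$ with the diagonal property and a witness $y_n\in U$, conclude $N(x,U)\supset N((x,y_n),U\times U)$ is a D-set), and the implication (3)$\Rightarrow$(1) via $\F_D\subset\F_{ip}\bigcap\F_{pubd}$ all match the paper. But there is a genuine gap in (1)$\Rightarrow$(2): everything hinges on your claim that each set $R_n=\{x\in X:\ (y_n,y_n)\in\overline{Orb((x,y_n),T\times T)}\}$ is residual, and the justification you sketch for it rests on a false principle. Unwinding the definition, density of the open set $\bigcup_{m\in N(y_n,V)}T^{-m}V$ requires $N(W,V)\cap N(y_n,V)\neq\emptyset$ for every opene $W$, and you propose to get this because ``$N(W,V)$ is thick (weak mixing) and $N(y_n,V)$ has positive upper Banach density.'' A thick set need not meet a set of positive upper Banach density: take $A=\bigcup_{k\geq 1}[2^{2k},2^{2k+1}]$, which has $BD^*(A)=1$, while its complement contains the thick set $\bigcup_{k\geq 1}[2^{2k+1}+1,\,2^{2k+2}-1]$. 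So thickness of $N(W,V)$ together with positive Banach density of $N(y_n,V)$ cannot, by themselves, produce the required intersection, and the residuality of $R_n$ is left unproven (it is precisely the hard point, and it is not clear it can be fixed without the ingredient below).

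The ingredient you are missing is the paper's key observation: since $(X,T)$ is a weakly mixing E-system, the product $(X\times X,T\times T)$ is itself an E-system (the product measure is invariant with full support, and weak mixing gives transitivity of the product), so by Theorem \ref{thm:E-M-system}(1) the set $R=Trans_{\F_{pubd}}(X\times X,T\times T)$ coincides with $Trans(X\times X,T\times T)$ and is a dense $G_\delta$. The paper applies the Ulam Lemma to this $R$, getting a residual $Y\subset X$ such that $R(x)$ is dense for every $x\in Y$; then, given an opene $U$, the witness $y\in R(x)\cap U$ is chosen \emph{depending on} $U$. This $y$ is automatically $\F_{pubd}$-recurrent, because $N(y,V)\supset N((x,y),X\times V)\in\F_{pubd}$ for every neighborhood $V$ of $y$, and it automatically satisfies $(y,y)\in\overline{Orb((x,y),T\times T)}$, because $(x,y)$ is a transitive point of the product. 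In other words, the quantifier order you chose --- fix a countable dense family of $\F_{pubd}$-recurrent points first, then demand residually many $x$ good for all of them --- creates a difficulty the paper never faces: once the product is known to be an E-system, the witness handed over by Ulam's lemma already carries the $\F_{pubd}$-recurrence you were trying to force by hand, and no separate residuality lemma for the sets $R_n$ is needed.
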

\begin{proof}
(1)$\Rightarrow$(2).
It is easy to see that  $(X\times X,T\times T)$ is also an E-system.
Let $R=Trans_{pubd}(X\allowbreak\times X, T\times T)$, then
by Theorem \ref{thm:E-M-system} $R$ is a dense $G_\delta$ subset of $X\times X$.
By Ulam Lemma, there exists a dense $G_\delta$ subset $Y$ of $X$
such that for every $x\in Y$, $R(x)$ is a dense $G_\delta$ subset of $X$.
Then it suffices to show that $Y\subset Trans_{\F_{D}}(X,T)$.

Let $x\in Y$ and $U$ be an opene subset of $X$. Choose $y\in R(x)\cap U$,
then $y$ is $\F_{pubd}$-recurrent.
Since $(x,y)$ is a transitive point in $X\times X$,
$(y,y)\in \overline{Orb((x,y),T\times T)}$.
Clearly,  $N((x,y), U\times U)\subset N(x,U)$,
so by the definition of D-set one has $N(x, U)\in \F_D$.
Thus, $x$ is an $\F_{D}$-transitive point.

(2)$\Rightarrow$(3) is obvious.

(3)$\Rightarrow$(1). By $\F_D\subset \F_{pubd}$ and Theorem \ref{thm:E-M-system}, $(X,T)$ is an E-system.
Therefore, by Theorem \ref{thm:weak-mixing} and $\F_D\subset \F_{ip}$, $(X,T)$ is weakly mixing.
\end{proof}

\begin{rem} (1). There exists a weakly mixing system which is not an E-system (\cite{HZ02,HLY09}).

(2). There exists a weakly mixing E-system which is not an M-system (\cite{BD08}).
\end{rem}

\section{Systems with dense small periodic sets}
In this section, we characterize transitive systems with dense small periodic sets.
Let $(X,T)$ be a TDS, we call $(X,T)$ has {\em dense small periodic sets} (\cite{HY05}) if
for every open subset $U$ of $X$ there exists a closed subset $Y$ of $U$ and $k \in\mathbb  N$ such
that $Y$ is invariant for $T^k$ (\ie, $T^kY\subset Y$).
Clearly, every P-system has dense small periodic sets.
If $(X,T)$ is transitive and has dense small periodic sets,
then it is an M-system.

To characterize the system with dense small periodic set,
we need a new kind of subsets of $\Z$. A subset $F$ of $\mathbb Z_+$ is called {\em weakly thick}
if there exists some $k\in\mathbb N$ such that
$\{n\in\mathbb Z_+: kn\in F\}$ is thick.
Let $\mathcal F_{wt}$ denote the family of all weakly thick sets.

\begin{lem}\label{lem:dsps}
Let $(X,T)$ be a TDS.  Then the following conditions are equivalent:
\begin{enumerate}
\item $(X,T)$ has dense small periodic sets.
\item For every opene subset $U$ of $X$, there exists $k\in\mathbb N$
such that $U$ contains a minimal subsystem of $(X,T^k)$.
\item It is $b\F_{wt}$-point center.
\end{enumerate}
\end{lem}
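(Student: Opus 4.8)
The plan is to establish the cycle $(1)\Rightarrow(2)\Rightarrow(3)\Rightarrow(1)$, with the last implication carrying essentially all of the work. The equivalence $(1)\Leftrightarrow(2)$ is little more than a reformulation: a nonempty closed $T^k$-invariant set is exactly the phase space of a subsystem of $(X,T^k)$, and by Zorn's lemma (using that a nested intersection of nonempty closed $T^k$-invariant sets is again nonempty closed $T^k$-invariant) every such set contains a minimal one. Thus, given $(1)$ and an opene $U$, I take the closed $T^k$-invariant set $Y\subset U$ furnished by $(1)$ and extract a minimal subsystem $(Z,T^k)$ with $Z\subset Y\subset U$; conversely a minimal subsystem of $(X,T^k)$ contained in $U$ is itself a closed $T^k$-invariant subset of $U$. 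So I dispatch $(1)\Rightarrow(2)$ and $(2)\Rightarrow(1)$ together in one short step.

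For $(2)\Rightarrow(3)$ I exploit forward invariance. Given an opene $U$, I choose $k\in\N$ and a minimal subsystem $(Z,T^k)$ of $(X,T^k)$ with $Z\subset U$, and pick any $x\in Z$. Since $T^kZ\subset Z\subset U$, the whole forward $T^k$-orbit of $x$ stays in $U$, that is, the set $k\Z$ of multiples of $k$ satisfies $k\Z\subset N(x,U)$. Because $\{n\in\Z: kn\in k\Z\}=\Z$ is thick, $k\Z$ is weakly thick, hence $k\Z\in\F_{wt}\subset b\F_{wt}$; upward heredity then gives $N(x,U)\in b\F_{wt}$ with $x\in U$, which is precisely $b\F_{wt}$-point centrality.

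The heart of the proof is $(3)\Rightarrow(1)$. First, before invoking point centrality I shrink $U$: since $X$ is metric I pick an opene $V$ with $\overline V\subset U$ and apply $(3)$ to $V$, obtaining $x\in V$ with $N(x,V)\in b\F_{wt}$; this shrinking is what lets the set I build land inside the \emph{open} set $U$ rather than merely inside $\overline U$. Next I extract combinatorial structure from $b\F_{wt}$. Unwinding both definitions, $N(x,V)\in b\F_{wt}$ supplies some $F'\in\F_{wt}$, hence a \emph{single} fixed $k\in\N$ with $\{n: kn\in F'\}$ thick, and the block condition translates arbitrarily long initial segments of $F'$ into $N(x,V)$; this forces $N(x,V)$ to contain arbitrarily long arithmetic progressions, all with the one common difference $k$. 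Concretely there are $c_\ell\in\Z$ with $\{c_\ell+jk:\,0\le j\le\ell\}\subset N(x,V)$, so the points $y_\ell=T^{c_\ell}x$ satisfy $(T^k)^jy_\ell\in V$ for $0\le j\le\ell$. A compactness argument then closes the loop: passing to a convergent subsequence $y_{\ell_i}\to y$ and using continuity of $T^k$, I get $(T^k)^jy\in\overline V$ for every $j\ge0$, so $Y=\overline{\{(T^k)^jy:\,j\ge0\}}$ is a nonempty closed $T^k$-invariant set with $Y\subset\overline V\subset U$, which is $(1)$.

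I expect the main obstacle to be this extraction step: one must check that a single difference $k$ serves all lengths, which it does because $k$ is fixed once $F'\in\F_{wt}$ is chosen, whereas the residue class of the progressions modulo $k$ is immaterial, since the compactness argument only uses that $T^k$ is one fixed map. The auxiliary shrinking $\overline V\subset U$ is the secondary subtlety, guaranteeing that the invariant set we construct genuinely sits inside $U$.
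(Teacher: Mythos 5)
Your proof is correct and takes essentially the same route as the paper's: (1)$\Leftrightarrow$(2) via existence of minimal subsystems, (2)$\Rightarrow$(3) via $k\N\subset N(x,U)$ with $k\N\in b\F_{wt}$, and (3)$\Rightarrow$(1) by shrinking to an opene $V$ with $\overline{V}\subset U$, unwinding $N(x,V)\in b\F_{wt}$ into arbitrarily long arithmetic progressions with one fixed common difference $k$, and taking a limit of the progression starting points to produce a closed $T^k$-invariant set inside $\overline{V}\subset U$. The differences are purely cosmetic: the paper writes the progressions as $\bigcup_{i=1}^\infty(a_i+k[n_i,n_i+i])\subset N(x,V)$ and sets $y=\lim_{i\to\infty}T^{a_i+kn_i}x$ directly, which is exactly your $c_\ell$-and-subsequence argument.
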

\begin{proof}
(1)$\Rightarrow$(2) follows from the fact that every system constants a minimal subsystem.

(2)$\Rightarrow$(3). Let $U$ be an opene subset of $X$, then
there exists $k\in\mathbb N$ such that $U$ contains a minimal subsystem $Y$ of $(X,T^k)$.
Choose a point $y\in Y\subset U$. Then $k\N\subset N(y,U)$ and $(X,T)$ is $b\F_{wt}$-point center
since $k\N\in b\F_{wt}$.

(3)$\Rightarrow$(1). Let $U$ be an opene subset of $X$.
Choose an opene subset $V$ of $X$ such that $\overline{V}\subset U$.
Since $(X,T)$ is $b\F_{wt}$-point center, there exists $x\in V$ such that $N(x, V)\in b\F_{wt}$.
Let $F=N(x,V)$, then there exist two sequences $\{a_i\}_{i=1}^\infty$, $\{n_i\}_{i=1}^\infty$ in $\Z$
and $k\in \N$ such that
$\bigcup_{i=1}^\infty(a_i+ k[n_i, n_i+i])\subset F$.
Without lose of generality, assume that
$\lim_{i\to \infty}T^{a_i+kn_i}x \allowbreak=y\in \overline{\{T^n x: n\in F\}}\subset \overline{V}$.
Let $Y=\overline{Orb(y,T^k)}$. Clearly, $T^k Y\subset Y$.
Then it suffices to show that $Y\subset U$.

For every $m\in\N$, if $i>m$, then $a_i+k(n_i+m)\in F$, so
\[T^{km}y=\lim_{i\to\infty}T^{a_i+k(n_i+m)}x\in \overline{\{T^n x: n\in F\}}\subset \overline{V}. \]
Thus, $Y=\overline{\{T^{km}y: m\in\N\}}\subset\overline{V}\subset U$.
\end{proof}

\begin{thm}\label{thm:trans-periodic-set}
Let $(X,T)$ be a TDS. Then the following conditions are equivalent:
\begin{enumerate}
\item $(X,T)$ is transitive and has dense small periodic sets.
\item $Trans_{b\F_{wt}}(X,T)=Trans(X,T)\neq\emptyset$.
\item $(X,T)$ is $b\F_{wt}$-point transitive.
\end{enumerate}
\end{thm}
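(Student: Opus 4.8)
The plan is to establish the cycle (1)$\Rightarrow$(2)$\Rightarrow$(3)$\Rightarrow$(1), and the whole argument should reduce to combining Lemma~\ref{lem:dsps} with Lemma~\ref{lem:F-point-center}, together with the idempotency $b(b\F_{wt})=b\F_{wt}$ recorded in the preliminaries. In other words, the substantive work has already been done in proving Lemma~\ref{lem:dsps}, and what remains is careful bookkeeping with the block operator.

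For (1)$\Rightarrow$(2) I would start from the hypothesis that $(X,T)$ is transitive with dense small periodic sets. By the equivalence (1)$\Leftrightarrow$(3) of Lemma~\ref{lem:dsps}, the system is $b\F_{wt}$-point center. Applying Lemma~\ref{lem:F-point-center} with the family $\F=b\F_{wt}$ then yields $Trans_{b(b\F_{wt})}(X,T)=Trans(X,T)$, and since $b(b\F_{wt})=b\F_{wt}$ this is exactly $Trans_{b\F_{wt}}(X,T)=Trans(X,T)$. Transitivity guarantees $Trans(X,T)\neq\emptyset$, which completes (2). The implication (2)$\Rightarrow$(3) is then immediate, since the equality in (2) exhibits a $b\F_{wt}$-transitive point, which is precisely what $b\F_{wt}$-point transitivity asserts.

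For (3)$\Rightarrow$(1), suppose $(X,T)$ is $b\F_{wt}$-point transitive, witnessed by a point $x$. Because every family considered here lies in $\F_{inf}$, each entering time set $N(x,U)$ is infinite, so $x$ is an ordinary transitive point and $(X,T)$ is transitive. Moreover, $b\F_{wt}$-point transitivity implies $b\F_{wt}$-point center: forward images of a transitive point are again $b\F_{wt}$-transitive, so given an opene $U$ one finds such a point inside $U$ itself. Feeding this into the equivalence (3)$\Rightarrow$(1) of Lemma~\ref{lem:dsps} delivers dense small periodic sets, closing the cycle.

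I do not expect a genuine obstacle in the theorem proper; the real difficulty was already absorbed into Lemma~\ref{lem:dsps}, in particular its (3)$\Rightarrow$(1) half, where a $T^k$-invariant closed subset of $U$ must be extracted from the block structure of a weakly thick entering-time set via the limit $y=\lim_i T^{a_i+kn_i}x$. The only points in the present proof that demand attention are the two uses of the block operator: one must invoke $b(b\F_{wt})=b\F_{wt}$ so that condition (2) is phrased with $b\F_{wt}$ rather than an iterated block family, and one must rely on the standing convention $b\F_{wt}\subset\F_{inf}$ to license the passage from $b\F_{wt}$-point transitivity to ordinary transitivity.
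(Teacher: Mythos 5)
Your proposal is correct and takes essentially the same route as the paper: the paper likewise derives (1)$\Rightarrow$(2) by combining Lemma~\ref{lem:F-point-center} with Lemma~\ref{lem:dsps} (implicitly using $b(b\F_{wt})=b\F_{wt}$), treats (2)$\Rightarrow$(3) as immediate, and gets (3)$\Rightarrow$(1) from Lemma~\ref{lem:dsps} together with the standing facts that $b\F_{wt}\subset\F_{inf}$ and that $\F$-point transitivity implies $\F$-point centering. You have merely spelled out the bookkeeping the paper leaves implicit.
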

\begin{proof}
(1)$\Rightarrow$(2) follows from Lemma \ref{lem:F-point-center} and Lemma \ref{lem:dsps}.

(2)$\Rightarrow$(3) is obvious.

(3)$\Rightarrow$(1) also follows from Lemma \ref{lem:dsps}.
\end{proof}

Denote $\F_{rs}=\{F\subset\Z:\, \exists k\in\N, \textrm{ s.t. } k\N\subset F\}$.
Let $(X,T)$ be a TDS, a point $x\in X$ is called a {\em quasi-periodic point}
if it is $\F_{rs}$-recurrent.

\begin{cor}
Let $(X,T)$ be an infinite minimal system. Then the following conditions are equivalent:
\begin{enumerate}
\item  $(X,T)$ has dense small periodic sets.
\item It is an almost one-to-one extension of some adding machine system.
\item It has some quasi-periodic point.
\item It is $b\F_{wt}$-point transitive.
\end{enumerate}
\end{cor}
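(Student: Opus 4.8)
The equivalence (1)$\Leftrightarrow$(4) is immediate and costs nothing new: a minimal system is transitive with $Trans(X,T)=X$, so Theorem~\ref{thm:trans-periodic-set} applies verbatim (its conditions (2),(3) become exactly (1),(4) here). Thus the real task is to weave (2) and (3) into the equivalence, and the plan is to close the loop (1)$\Rightarrow$(3)$\Rightarrow$(2)$\Rightarrow$(1). The hypothesis that $(X,T)$ is \emph{infinite} is used throughout to guarantee that $X$ has no isolated points, so that diameters of the sets we construct can be driven to $0$ and the target adding machine is genuinely infinite.

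For (1)$\Rightarrow$(3) the plan is to produce a quasi-periodic (i.e.\@ $\F_{rs}$-recurrent) point as the unique point of a nested intersection of periodic closed sets of vanishing diameter. First I would record an inheritance lemma: if minimal $(X,T)$ has dense small periodic sets and $Y$ is a minimal subsystem of $(X,T^{k})$, then $(Y,T^{k})$ again has dense small periodic sets. Here one uses that the minimal subsystems of $(X,T^{k})$ are finitely many, pairwise disjoint (distinct minimal sets are disjoint) and cyclically permuted by $T$. Given a relatively opene $O\subseteq Y$, I would choose an opene $G\subseteq X$ separating $Y$ from the other components, with $G\cap Y\subseteq O$ and $G$ missing every other component, apply Lemma~\ref{lem:dsps}(2) to $G$ in $(X,T)$ to get a minimal subsystem $M$ of some $(X,T^{m})$ inside $G$, and (after replacing $M$ by a minimal subsystem of $(X,T^{mk})$) observe $M\subseteq G\cap Y\subseteq O$, so $M$ is a minimal subsystem of $(Y,(T^{k})^{m})$ inside $O$. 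With this lemma I would build inductively minimal subsystems $V_{1}\supseteq V_{2}\supseteq\cdots$ of $(X,T^{k_{n}})$ with $k_{n}\mid k_{n+1}$ and $\mathrm{diam}(V_{n})\to0$, each step applying Lemma~\ref{lem:dsps}(2) inside $(V_{n},T^{k_{n}})$ (legitimate by inheritance) to a small relatively opene subset. The point $x$ with $\{x\}=\bigcap_{n}V_{n}$ is then quasi-periodic: given a neighbourhood $U$ of $x$, pick $n$ with $V_{n}\subseteq U$; since $T^{k_{n}}V_{n}\subseteq V_{n}$ and $x\in V_{n}$ we get $k_{n}\N\subseteq N(x,U)$, i.e.\@ $N(x,U)\in\F_{rs}$.

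For (3)$\Rightarrow$(2) let $x$ be quasi-periodic. Choosing periods $p_{1}\mid p_{2}\mid\cdots$ with $p_{n}\N\subseteq N(x,B(x,1/n))$ (passing to multiples to force divisibility), set $A_{n}=\overline{Orb(x,T^{p_{n}})}$; then $A_{n}\subseteq\overline{B(x,1/n)}$, the $A_{n}$ are nested $T^{p_{n}}$-invariant closed sets with $\bigcap_{n}A_{n}=\{x\}$, and $X=\bigcup_{r=0}^{p_{n}-1}T^{r}A_{n}$. With $Z=\varprojlim\mZ/p_{n}\mZ$ the associated adding machine, I would define a factor map $\pi\colon X\to Z$ by sending $y$ to the coordinates $(r_{n})_{n}$ determined by $y\in T^{r_{n}}A_{n}$, and check that $\pi$ is a continuous equivariant surjection. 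The almost one-to-one property is then automatic: $\pi^{-1}(\pi(x))=\bigcap_{n}A_{n}=\{x\}$, and by equivariance every point of the dense orbit of $x$ has a singleton fibre, so the injectivity set is a dense $G_{\delta}$. \textbf{The main obstacle is exactly the well-definedness and continuity of $\pi$}: the closed cover $\{T^{r}A_{n}\}_{r}$ need not be a partition, so the coordinate $r_{n}$ is unambiguous only off the overlaps, and one must show these overlaps are level-compatible and collapse in the inverse limit (equivalently, that the boundary-identification relation is precisely the fibre relation of the maximal equicontinuous factor). This is the classical passage from a regularly recurrent point to an almost automorphic, almost one-to-one odometer extension, which I would carry out through the regionally proximal relation or quote from the Toeplitz/odometer literature.

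For (2)$\Rightarrow$(1) let $\pi\colon X\to Z$ be an almost one-to-one factor map onto an adding machine $Z$. The odometer $Z$ has dense small periodic sets: its clopen cylinders $\{z:z\equiv r\ (\mathrm{mod}\ p_{n})\}$ are $S^{p_{n}}$-invariant and form a base. Given an opene $U\subseteq X$, density of the injectivity set provides $x_{0}\in U$ with $\pi^{-1}(\pi(x_{0}))=\{x_{0}\}$; since $\pi$ is a closed map, $\{z\in Z:\pi^{-1}(z)\subseteq U\}$ is open and contains $\pi(x_{0})$, hence contains a clopen $S^{p}$-invariant cylinder $W$. Then $\pi^{-1}(W)$ is a nonempty closed $T^{p}$-invariant subset of $U$, which exhibits dense small periodic sets for $(X,T)$ and closes the circle.
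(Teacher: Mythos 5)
The paper's own proof of this corollary is two citations: the equivalences (1)$\Leftrightarrow$(2)$\Leftrightarrow$(3) are quoted wholesale from \cite{HLY09}, and (1)$\Leftrightarrow$(4) follows from Theorem \ref{thm:trans-periodic-set}. Your handling of (1)$\Leftrightarrow$(4) is exactly the paper's. Where you genuinely diverge is in trying to \emph{prove} (1)$\Rightarrow$(3)$\Rightarrow$(2)$\Rightarrow$(1) rather than cite it, and two of these three implications are carried out correctly and completely. Your inheritance lemma is sound: the $T^k$-minimal subsets of a minimal $(X,T)$ are precisely the distinct sets among $Y, TY,\ldots,T^{k-1}Y$, pairwise disjoint, closed, with union $X$, so the separating opene set $G$ exists, and any minimal set $M\subset G$ (for some $T^m$) must lie in $Y$ because it avoids all other components; replacing $M$ by a $T^{mk}$-minimal subset of $M$ puts it inside $O$ as a subsystem of $(Y,(T^k)^m)$. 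The nested sequence $V_n$ with $k_n\mid k_{n+1}$ and $\mathrm{diam}(V_n)\to 0$ then yields $k_n\N\subset N(x,U)$, i.e.\ a quasi-periodic point, proving (1)$\Rightarrow$(3). Your (2)$\Rightarrow$(1) is also correct: $\pi$ closed gives that $\{z: \pi^{-1}(z)\subset U\}=Z\setminus\pi(X\setminus U)$ is open, it contains $\pi(x_0)$ for an injectivity point $x_0\in U$, and the preimage of an $S^{p}$-invariant clopen cylinder inside it is a nonempty closed $T^{p}$-invariant subset of $U$. What this self-contained route buys, compared with the paper's citation, is an actual argument; what it costs is that it must also deliver the hard implication, which is where the problem lies.

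That hard implication is (3)$\Rightarrow$(2), and there your proposal has a genuine gap, which you yourself flag: the sets $T^rA_n$, $0\le r<p_n$, form a closed cover of $X$ but in general \emph{not} a partition, so the coordinate map $y\mapsto (r_n)_n$ is not even well defined pointwise, let alone continuous. Showing that the overlaps are compatible across levels and collapse in the inverse limit --- equivalently, that the orbit closure of a regularly recurrent point in an infinite minimal system is an almost one-to-one extension of an odometer --- is the entire content of this implication; it is a classical theorem, not a routine verification, and ``carry it out through the regionally proximal relation or quote from the Toeplitz/odometer literature'' does not discharge it. To be fair, the paper does exactly the same thing at larger scale, resting all of (1)$\Leftrightarrow$(2)$\Leftrightarrow$(3) on \cite{HLY09}; so your proposal should be read as: the paper's proof of (1)$\Leftrightarrow$(4), a correct self-contained upgrade for (1)$\Rightarrow$(3) and (2)$\Rightarrow$(1), and, for (3)$\Rightarrow$(2), a citation that is vaguer than the paper's and must be made precise (or the well-definedness argument supplied) before the proof is complete.
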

\begin{proof}
(1)$\Leftrightarrow$(2)$\Leftrightarrow$(3) were proved in \cite{HLY09}
and (1)$\Leftrightarrow$(4) follows from Theorem \ref{thm:trans-periodic-set}.
\end{proof}

Recall that a system $(X,T )$ is called {\em totally transitive} if for every $k\in \N$, $(X,T^k)$ is transitive.
In \cite{HY05} Huang and Ye showed that a system which is totally transitive and
has dense small periodic sets is disjoint from every minimal system.
We call such a system {\em HY-system} for abbreviation.
It is not hard to see that an HY-system is a weakly mixing M-system (\cite{HY05}).

\begin{thm}\label{thm:HY-system}
Let $(X,T)$ be a TDS, then the following conditions are equivalent:
\begin{enumerate}
\item $(X,T)$ is an HY-system.
\item $Trans_{\mathcal F_{wt}}(X,T)=Trans(X,T)\neq\emptyset$.
\item $(X,T)$ is $\mathcal F_{wt}$-point transitive.
\item $(X,T^k)$ is an HY-system for every $k\in\N$.
\item $(X^k, T^{(k)})$ is an HY-system for every $k\in\N$,
where $X^k=X\times X\times \cdots\times X$($k$ times) and
$T^{(k)}=T\times T\times \cdots\times T$($k$ times).
\end{enumerate}
\end{thm}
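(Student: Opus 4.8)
The plan is to prove the core equivalence $(1)\Leftrightarrow(2)\Leftrightarrow(3)$ first and then dispatch $(4)$ and $(5)$ by showing the HY property is stable under powers and products, the reverse implications being the trivial instance $k=1$. Throughout I would use freely that an HY-system is a weakly mixing M-system, Theorem~\ref{thm:trans-periodic-set}, Lemma~\ref{lem:dsps}, and the inclusion $\F_{wt}\subset b\F_{wt}$ (which holds since $\F\subset b\F$ for every family).

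For $(1)\Rightarrow(2)$ I would fix a transitive point $x$ (one exists, and by Theorem~\ref{thm:trans-periodic-set} in fact $Trans(X,T)=Trans_{b\F_{wt}}(X,T)\neq\emptyset$) and an opene $U$, and show $N(x,U)\in\F_{wt}$. The first step is the general lemma that \emph{in a totally transitive system every transitive point of $(X,T)$ is a transitive point of $(X,T^k)$ for every $k$}: write $X=\bigcup_{i=0}^{k-1}\overline{Orb(T^ix,T^k)}$ as a finite union of closed $T^k$-invariant sets; since $(X,T^k)$ is transitive it has a transitive point lying in one piece, which forces that piece to equal $X$, and then surjectivity of $T$ (automatic for transitive systems) propagates this to every piece, in particular to $\overline{Orb(x,T^k)}=X$. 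The second step invokes dense small periodic sets: by Lemma~\ref{lem:dsps}(2) there is $k\in\N$ and a $T^k$-minimal set $M\subset U$, so $U_\ell:=\bigcap_{j=0}^{\ell}T^{-jk}U\supseteq M$ is opene for every $\ell$. As $x$ is $T^k$-transitive it enters each $U_\ell$ along $\{T^{kn}\}$, and each such visit produces a run of length $\ell+1$ in $\{n:T^{kn}x\in U\}$; letting $\ell\to\infty$ shows this set is thick, i.e.\ $N(x,U)\in\F_{wt}$.

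The implication $(2)\Rightarrow(3)$ is immediate. For $(3)\Rightarrow(1)$, let $x$ be an $\F_{wt}$-transitive point. Since $\F_{wt}\subset b\F_{wt}$, $x$ is $b\F_{wt}$-transitive, so by Theorem~\ref{thm:trans-periodic-set} the system is transitive with dense small periodic sets, and it remains only to establish total transitivity. Fixing $m\in\N$ and an opene $U$, choose $k$ with $A:=\{n:T^{kn}x\in U\}$ thick, and use the elementary fact that among any $m$ consecutive integers there is one, a multiple of $m/\gcd(k,m)$, with $m\mid kn$; applying this inside arbitrarily long runs of $A$ yields infinitely many $j$ with $T^{mj}x\in U$. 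Hence $x$ is a transitive point of $(X,T^m)$ for every $m$, so $(X,T)$ is totally transitive and therefore an HY-system.

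Finally, $(1)\Rightarrow(4)$ and $(1)\Rightarrow(5)$ follow from stability of both defining properties. Total transitivity of $(X,T^k)$ and of $(X^k,T^{(k)})$ comes from total transitivity of $(X,T)$ together with the well-known fact that finite products and powers of weakly mixing systems are weakly mixing, hence totally transitive; dense small periodic sets passes to $(X,T^k)$ using the same witnessing set $Y$ (with $T^{km}Y\subset Y$) and to $(X^k,T^{(k)})$ by forming products $Y_1\times\cdots\times Y_k$ of witnesses with a common invariance exponent. The implications $(4)\Rightarrow(1)$ and $(5)\Rightarrow(1)$ are just the case $k=1$. I expect the main obstacle to be the two passages between $b\F_{wt}$ and $\F_{wt}$: in $(1)\Rightarrow(2)$, upgrading a $b\F_{wt}$-transitive point to a genuinely $\F_{wt}$-transitive one by \emph{aligning} the long runs to the grid $k\Z$, which is precisely where total transitivity enters through the transitive-point-of-powers lemma; and the dual alignment in $(3)\Rightarrow(1)$ showing that thick $k$-scaled runs must meet $m\Z$.
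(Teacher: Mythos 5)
Your proposal is correct, and its ingredients largely coincide with the paper's, but the implications are organized differently. The paper proves the cycle (1)$\Rightarrow$(2)$\Rightarrow$(3)$\Rightarrow$(4)$\Rightarrow$(1) (plus (1)$\Leftrightarrow$(5)): its step (3)$\Rightarrow$(4) rests on a family-theoretic Claim --- if $F$ is weakly thick then so is $F_k=\{m:\, mk\in F\}$ for every $k\in\N$ --- which shows that an $\F_{wt}$-transitive point of $(X,T)$ remains an $\F_{wt}$-transitive point of $(X,T^k)$, whence every power is transitive and, by Lemma \ref{lem:dsps}, has dense small periodic sets. You instead close the loop with (3)$\Rightarrow$(1) directly: the ``transitive with dense small periodic sets'' part comes from $\F_{wt}\subset b\F_{wt}$ and Theorem \ref{thm:trans-periodic-set}, and total transitivity from the divisibility observation that any run of length $m$ in the thick set $A=\{n:\, T^{kn}x\in U\}$ contains some $n$ with $m\mid kn$. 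This is the same arithmetic fact that powers the paper's Claim, but you only need infiniteness of $\{j:\, T^{mj}x\in U\}$, whereas the paper establishes its weak thickness; correspondingly, your (1)$\Rightarrow$(4) is pure stability of the two defining properties (the same witnessing set $Y$ works for $T^k$, and powers of powers are powers), which is simpler than the paper's route to (4), while the paper's route yields the slightly stronger by-product that the same point stays $\F_{wt}$-transitive for every $(X,T^k)$. Your treatment of (1)$\Rightarrow$(2), (2)$\Rightarrow$(3) and (1)$\Leftrightarrow$(5) matches the paper's; in addition you supply a proof (via the decomposition $X=\bigcup_{i=0}^{k-1}\overline{Orb(T^ix,T^k)}$ and surjectivity of $T$) of the fact, cited without proof in the paper, that in a totally transitive system transitive points persist for all powers, and that argument is sound.
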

\begin{proof}
(1)$\Rightarrow$(2). We need to show every transitive point is an $\F_{wt}$-transitive point.
Let $x$ be a transitive point and $U$ be an opene subset of $X$.
Since $(X,T )$ has dense small periodic sets,
there exists a closed subset $Y$ of $U$ and $k\in\mathbb N$
such that $T^k Y\subset Y$. Since $(X,T)$ is totally transitive,
$x$ is also a transitive point for the system $(X,T^k)$.
By the continuity of $T$, it is easy to see that $\{n\in\mathbb Z_+: (T^k)^nx\in U\}$ is thick.
Thus, $N(x,U)=\{n\in\Z: T^n x\in U\}$ is weakly thick, so $x$ is an $\F_{wt}$-transitive point.

(2)$\Rightarrow$(3) is obvious.

(3)$\Rightarrow$(4). We first prove the following Claim.

\smallskip
{\bf Claim}: If $F$ is weakly thick, then for every $k\in \mathbb N$,
$F_k=\{m\in\mathbb Z_+: mk\in F\}$ is also weakly thick.

{\bf Proof of the Claim}:
If $F$ is weakly thick, then there exists $r\in\mathbb N$ such that $F_r=\{m\in\mathbb Z_+: rm\in F\}$ is thick.
Then for every $k\in\N$,
\[\{m\in\mathbb Z_+: rm\in F_k\}=\{m\in\mathbb Z_+: krm\in F\}=\{m\in \Z: mk\in F_r\}\]
is also thick. Thus, $F_k$ is weakly thick.

To show that $(X,T^k)$ is an HY-system for every $k\in\N$,
it suffices to show that $(X,T^k)$ is transitive and has dense small periodic sets for every $k\in\N$.
Now fix $k\in\N$. Let $x\in X$ be an $\F_{wt}$-transitive point in $(X,T)$, then by the Claim,
$x$ also is an $\F_{wt}$-transitive point in $(X,T^k)$. Thus, $(X,T^k)$ is transitive
and by Lemma \ref{lem:dsps} $(X,T^k)$ has dense small periodic sets.

(4)$\Rightarrow$(1) and (5)$\Rightarrow$(1) are obvious.

(1)$\Rightarrow$(5). Since every HY-system is weakly mixing, $(X^k, T^{(k)})$ is totally transitive for every $k\in\N$.
Then it suffices to show that $(X^k, T^{(k)})$ has dense small periodic sets for every $k\in\N$.

Now fix $k\in\N$. Let $W$ be an opene subset of $X^k$, then there exist opene subsets $U_1,U_2,\ldots,U_k$ of $X$
such that $U_1\times U_2\times \cdots \times U_k\subset W$. Since $(X,T)$ has dense small periodic sets,
there exists closed subsets $Y_1,Y_2,\ldots, Y_k$ of $U_1, U_2,\ldots, U_k$ and $n_1, n_2,\ldots, n_k\in\N$
such that $T^{n_i}Y_i\subset Y_i$ for $i=1,2,\ldots,k$. Let $Y=Y_1\times Y_2\times\cdots\times Y_k$ and $n=n_1n_2\cdots n_k$.
Then $Y\subset W$ and $(T^{(k)})^{n}Y\subset Y$. Thus,  $(X^k, T^{(k)})$ has dense small periodic sets.
\end{proof}

\begin{rem}
(1). There exists a weakly mixing M-system which is not an HY-system (\cite{HY05}).

(2). There exists an HY-system without periodic points (\cite{HY05}).
\end{rem}

\section{Applications}
In this section as applications, we discuss disjointness and weak disjointness.
\subsection{Disjointness}
The notion of disjointness of two TDSs was introduced by Furstenberg his seminal
paper \cite{F67}. Let $(X, T)$ and $(Y, S)$ be two TDSs. We call $J\subset X\times Y$ is a {\em joining} of
$X$ and $Y$ if $J$ is a non-empty closed invariant set, and is projected onto $X$ and $Y$ respectively.
If each joining is equal to $X\times Y$ then we call $(X, T)$ and $(Y, S)$
are {\em disjoint}, denoted by $(X, T) \bot (Y, S)$ or $X\bot Y$. Note that if $(X, T)\bot (Y, S)$
then one of them is minimal, and if $(X, T)$ is minimal then the set of recurrent
points of $(Y, S)$ is dense (\cite{HY05}).

\begin{lem} \label{lem:disjoint}
Let $\F$ be a family.
If $(X,T)$ is $\F$-point transitive and $(Y,S)$ is minimal with $Rec_{\kappa\F}(Y,S)=Y$,
then $(X, T) \bot (Y, S)$.
\end{lem}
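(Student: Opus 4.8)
The plan is to prove this disjointness result by showing that any joining $J \subset X \times Y$ must be all of $X \times Y$. Since $(X,T) \bot (Y,S)$ requires every joining to equal $X \times Y$, I would fix an arbitrary joining $J$ and an arbitrary basic open box $U \times V$ with $U \subset X$, $V \subset Y$ opene, and aim to show $J \cap (U \times V) \neq \emptyset$; since $J$ is closed and invariant, this density will force $J = X \times Y$. The hypotheses provide exactly the two ingredients needed: an $\F$-transitive point $x \in X$ (so $N(x,U) \in \F$ for every opene $U$), and the fact that every point of $Y$ is $\kappa\F$-recurrent.

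The key mechanism is the interplay between the families $\F$ and $\kappa\F$, which by definition of the dual family satisfy $F \cap F' \neq \emptyset$ for all $F \in \F$, $F' \in \kappa\F$. First I would use that $J$ projects onto $X$: since $x$ is a transitive point (an $\F$-transitive point is in particular transitive), and $J$ is a closed invariant set projecting onto $X$, I can find a point $y \in Y$ with $(x,y) \in J$. Now for the opene set $V \subset Y$, the $\kappa\F$-recurrence hypothesis $Rec_{\kappa\F}(Y,S) = Y$ does not directly apply to $V$ unless $y \in V$; the cleaner route is to exploit minimality of $(Y,S)$. By minimality, the entering time set $N(y, V) = \{n : S^n y \in V\}$ is syndetic, but more importantly I would instead arrange to use recurrence at a well-chosen point. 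The plan is: pick any $y_0 \in V$; since $y_0 \in Rec_{\kappa\F}(Y,S)$, we have $N(y_0, V) \in \kappa\F$. Then because $x$ is $\F$-transitive, $N(x,U) \in \F$, and by the dual-family property $N(x,U) \cap N(y_0, V) \neq \emptyset$, giving some $n$ with $T^n x \in U$ and $S^n y_0 \in V$.

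The remaining step is to transfer this coincidence into the joining $J$. Using minimality of $Y$, the point $y_0 \in V$ can be approximated along the orbit of $y$ (the fiber companion of $x$): by minimality $\overline{Orb(y,S)} = Y$, so there is a sequence $S^{m_k} y \to y_0$. Combined with transitivity of $x$, I would extract a point $(x', y_0) \in J$ with $x'$ arbitrarily close to any target, and then apply the recurrence-versus-transitivity intersection argument to land the orbit of $(x', y_0)$ inside $U \times V$. The main obstacle I anticipate is the bookkeeping to ensure the single index $n$ produced by the intersection $N(x,U) \cap N(y_0,V) \neq \emptyset$ genuinely acts on a pair lying in $J$, rather than on $x$ and $y_0$ separately; resolving this requires first placing a pair $(x', y_0) \in J$ (via the projection onto $Y$ and minimality/transitivity) and then using $\F$-transitivity of the first coordinate together with $\kappa\F$-recurrence of $y_0$ to find a common return time into $U \times V$. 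Once $J \cap (U \times V) \neq \emptyset$ for every opene box, density and closedness yield $J = X \times Y$, completing the proof.
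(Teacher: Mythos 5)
Your first two ingredients are right (the duality property of $\F$ and $\kappa\F$, and using the projection of $J$ onto $X$ to find $y$ with $(x,y)\in J$), and you correctly spotted the obstacle: an index $n\in N(x,U)\cap N(y_0,V)$ acts on $x$ and $y_0$ separately, while $(x,y_0)$ need not lie in $J$. But your proposed resolution does not close this gap. You want to place a pair $(x',y_0)\in J$ with $y_0\in V$ prescribed and then ``use $\F$-transitivity of the first coordinate''; however, the point $x'$ you extract (a limit of $T^{m_k}x$ along times with $S^{m_k}y\to y_0$) is not known to be $\F$-transitive: $\F$-transitivity is a property of the particular point $x$, it does not pass to points of the orbit closure, and nothing guarantees that the fiber $\{x'':(x'',y_0)\in J\}$ contains \emph{any} $\F$-transitive point. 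Moreover, even your intermediate claim that $x'$ can be made ``arbitrarily close to any target'' is circular: it asks for $m$ with $T^mx$ in a prescribed opene set of $X$ \emph{and} $S^my$ in a prescribed neighborhood $V_0$ of $y_0$, which is exactly the simultaneous-hitting statement being proved. The hypothesis $Rec_{\kappa\F}(Y,S)=Y$ only puts $N(z,V_0)$ in $\kappa\F$ when $V_0$ is a neighborhood of $z$ \emph{itself}; the set $N(y,V_0)$, the returns of $y$ to a neighborhood of the different point $y_0$, is merely syndetic by minimality, and a syndetic set need not meet a given member of $\F$ (nor need an infinite set, if you only use plain transitivity of $x$).

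The paper's proof avoids all of this by reversing the order of quantifiers: it never prescribes the $Y$-coordinate. Work with the pair $(x,y)\in J$ itself and apply duality with $U\subset X$ an arbitrary opene set but $V$ a neighborhood \emph{of $y$}: then $N(x,U)\in\F$ and $N(y,V)\in\kappa\F$, so these sets intersect and $\overline{Orb((x,y),T\times S)}$ meets $U\times V$. Letting $U$ and $V$ vary gives $X\times\{y\}\subset\overline{Orb((x,y),T\times S)}\subset J$, i.e.\ full density in the first coordinate over the single point $y$. Only afterwards is the second coordinate swept out, using invariance of $J$ and minimality of $Y$ (plus the standard fact that $\overline{T^nX}=X$ in a transitive system): $T^nX\times\{S^ny\}\subset J$ forces $X\times\{S^ny\}\subset J$ for every $n$, hence $X\times Y=X\times\overline{Orb(y,S)}\subset J$. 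If you insist on prescribing $y_0\in V$ first, the honest repair is to take $y_0=S^{n_0}y$ with $n_0\in N(y,V)$ (nonempty by minimality) and $x'=T^{n_0}x$, invoking the paper's remark in Section 2 that $Tx$ is again $\F$-transitive whenever $x$ is; but the paper's ordering of the argument makes even that unnecessary.
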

\begin{proof}
Let $J$ be a joining of $X$ and $Y$, and $x\in Trans_{\F}(X,T)$.
There exists some $y\in Y$ such that $(x,y)\in J$.
For every opene subset $U$ of $X$ and neighborhood $V$ of $y$,
we have $N(x, U)\in \F$ and $N(y, V)\in \kappa\F$,
then  $\overline{Orb((x,y), T\times S)}\cap U\times V\neq\emptyset$.
Therefore, $X\times \{y\}\subset  \overline{Orb((x,y), T\times S)}\subset J$.
By $J$ is $T\times S$-invariant and $Y$ is minimal, we have $J=X\times Y$.
Thus, $(X, T) \bot (Y, S)$.
\end{proof}

Recall that a system is called {\em distal} if there is no proper proximal pairs.

\begin{thm}\cite[Theorem 9.11]{F81}
A minimal system $(X,T)$ is distal if and only if $Rec_{\kappa\F_{ip}}(X,T)=X$
\end{thm}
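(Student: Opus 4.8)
The plan is to prove the equivalence in both directions, using the characterization of distal systems via the idempotent structure of the enveloping semigroup, or more directly via the family-theoretic recurrence properties already set up in the paper. Recall that $\kappa\F_{ip}$ is the dual of the IP family, so $Rec_{\kappa\F_{ip}}(X,T)=X$ means that for every $x\in X$ and every neighborhood $U$ of $x$, the return time set $N(x,U)$ meets every IP set, i.e.\@ $N(x,U)\in\kappa\F_{ip}=\F_{ip}^*$. By the earlier Lemma (part (2)), an IP set is exactly the kind of set witnessed by a recurrent point, so $\F_{ip}^*$-recurrence is a strong, uniform form of recurrence; the content of the theorem is that this uniform recurrence over all IP sets is precisely distality.

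For the direction $(\Rightarrow)$, I would assume $(X,T)$ is minimal and distal and show $Rec_{\kappa\F_{ip}}(X,T)=X$. The key fact is that a minimal distal system has an enveloping semigroup that is a group of homeomorphisms, and every point is a minimal (almost periodic) point in a particularly strong sense: there are no nontrivial proximal relations. I would argue that for any $x$ and neighborhood $U$, and any IP set $F=FS\{p_i\}$, one can produce a return time of $x$ to $U$ inside $F$ by exploiting an idempotent $u$ in the closure of $\{T^{p}:p\in F\}$ inside the enveloping semigroup; since the system is distal, $ux=x$, which forces $T^{p}x\in U$ for some $p$ in the prescribed IP set. This packages the combinatorial content through the algebraic structure of $\beta\mathbb Z$ (idempotent ultrafilters), which is the cleanest route.

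For the converse $(\Leftarrow)$, I would argue contrapositively: if $(X,T)$ is minimal but \emph{not} distal, it has a nontrivial proximal pair $(x,y)$ with $x\neq y$. I would use this proximal pair to construct an IP set that is \emph{disjoint} from some return time set $N(z,U)$, thereby violating $Rec_{\kappa\F_{ip}}(X,T)=X$. Concretely, proximality of $(x,y)$ gives times $n_i$ along which $T^{n_i}x$ and $T^{n_i}y$ approach a common point; by the standard IP-enrichment argument (the same inductive finite-sums construction used in the Claim inside the proof of Theorem \ref{thm:weak-mixing}), one can extract an IP set $FS\{p_i\}$ along which both orbits stay close, and then separate a point from a small neighborhood to obtain the required disjointness, contradicting universal $\kappa\F_{ip}$-recurrence.

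The main obstacle will be the $(\Leftarrow)$ direction: converting a single proximal pair into a full IP set that misses a return-time set requires care, because proximality only gives a sequence of approach times, not automatically a finite-sums-closed set. The technical heart is therefore the IP-enrichment step, and I expect the cleanest argument to route through idempotent ultrafilters in $\beta\mathbb Z$, where proximality corresponds to the existence of a minimal idempotent $u$ with $ux=ux'$ for the proximal pair while distality would force $u$ to act as the identity. Reconciling the purely combinatorial definition of $\F_{ip}$ with this algebraic picture, and checking that the neighborhoods can be chosen uniformly, is where the real work lies.
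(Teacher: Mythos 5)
You should note at the outset that the paper itself contains \emph{no} proof of this statement: it is quoted from Furstenberg \cite[Theorem 9.11]{F81}. So the only meaningful comparison is with the standard argument in that source, and your outline follows essentially that route (Furstenberg's IP-limit formalism is the same machinery as your idempotents in the enveloping semigroup or in $\beta\mathbb{Z}_+$). The outline is correct, but two steps need repair before it is a proof. In the forward direction, the closure of $\{T^p: p\in F\}$ for an IP set $F=FS\{p_i\}_{i=1}^\infty$ is compact but is \emph{not} a subsemigroup of the enveloping semigroup: a sum of two finite sums whose index sets overlap need not be a finite sum, so Ellis' idempotent lemma does not apply to this set as you stated. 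The standard fix (Galvin--Glazer) is to take the idempotent $u$ in the closed subsemigroup $\bigcap_{m\geq 1}\overline{\{T^p: p\in FS\{p_i\}_{i\geq m}\}}$, which \emph{is} a semigroup because sums over disjoint blocks of indices stay in the tail IP sets. Granting that, your argument closes correctly: $(x,ux)$ is proximal for any idempotent $u$, distality forces $ux=x$, and since $u$ is a pointwise limit of maps $T^p$ with $p\in F$, every neighborhood $U$ of $x$ satisfies $N(x,U)\cap F\neq\emptyset$; as $F$ was an arbitrary IP set, $N(x,U)\in\kappa\F_{ip}$.

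For the converse, your caution is exactly right, and it is worth making the obstacle precise. The elementary finite-sums induction (run with uniform continuity of the finitely many maps $T^m$, $m\in FS\{p_i\}_{i=1}^n$, in place of the shrinking neighborhoods of the Claim in the proof of Theorem \ref{thm:weak-mixing}) produces an IP set of times at which $T^nx$ and $T^ny$ are close \emph{to each other}, but not close to any fixed point; the paper's Claim itself cannot be applied verbatim, because its shrinking-neighborhood step uses that the limit point is one of the two coordinates. To finish from the ``close to each other'' IP set one must intersect it with the sets $N(x,B(x,\epsilon))$ and $N(y,B(y,\epsilon))$, which are assumed to lie in $\kappa\F_{ip}$ --- and the fact that such intersections are still nonempty (equivalently, that $\kappa\F_{ip}$ is a filter) is precisely Hindman's theorem, not a formality. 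Your idempotent route sidesteps this cleanly: for a proximal pair $(x,y)$ with $x\neq y$ there is an idempotent ultrafilter $u$ whose limits equalize the two orbits (the set of such ultrafilters is a nonempty closed subsemigroup of $\beta\mathbb{Z}_+$, so Ellis' lemma applies); writing $z$ for the common limit and assuming, after possibly swapping $x$ and $y$, that $z\neq x$, choose disjoint neighborhoods $U$ of $z$ and $W$ of $x$. Then $N(x,U)$ is a member of $u$, hence contains an IP set by Galvin--Glazer, and that IP set is disjoint from $N(x,W)$, so $x\notin Rec_{\kappa\F_{ip}}(X,T)$. With these two repairs your plan is a complete proof along the same lines as the cited source.
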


\begin{cor}
Every $\F_{ip}$-point transitive system is disjoint from every minimal distal system.
\end{cor}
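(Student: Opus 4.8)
The plan is to obtain the corollary immediately from Lemma \ref{lem:disjoint} by specializing the family to $\F=\F_{ip}$ and feeding in Furstenberg's characterization of distal minimal systems. So first I would fix an arbitrary $\F_{ip}$-point transitive system $(X,T)$ together with an arbitrary minimal distal system $(Y,S)$, and my goal is simply to check that the two hypotheses of Lemma \ref{lem:disjoint} hold with $\F=\F_{ip}$.

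The first hypothesis, that $(X,T)$ is $\F_{ip}$-point transitive, is part of the assumption. For the second, I would invoke \cite[Theorem 9.11]{F81}: since $(Y,S)$ is minimal and distal, that theorem yields $Rec_{\kappa\F_{ip}}(Y,S)=Y$, which is exactly the recurrence condition on the target system demanded by Lemma \ref{lem:disjoint}. With both hypotheses verified, Lemma \ref{lem:disjoint} gives $(X,T)\bot(Y,S)$ at once, and as $(Y,S)$ was an arbitrary minimal distal system, this proves the statement.

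The argument presents no genuine obstacle: all the substance is carried by the two results already established, and the only point to confirm is that they dovetail, namely that the dual family $\kappa\F_{ip}$ appearing in Lemma \ref{lem:disjoint} is precisely the family for which \cite[Theorem 9.11]{F81} records that every point of a minimal distal system is recurrent. No computation is required, and the proof reduces to a single-line composition of Lemma \ref{lem:disjoint} with \cite[Theorem 9.11]{F81}.
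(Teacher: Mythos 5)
Your proof is correct and is exactly the argument the paper intends: the corollary is stated right after Lemma \ref{lem:disjoint} and Furstenberg's Theorem 9.11 precisely so that it follows by specializing $\F=\F_{ip}$ and combining the two, which is what you do. Nothing is missing, and no further detail is needed.
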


Recall that a system $(X,T)$ is called {\em totally minimal} if for every $k\in\N$, $(X,T^k)$ is minimal.

\begin{thm}
(1). Every HY-system is disjoint from every minimal system (\cite{HY05}).

(2). Every transitive system with dense small periodic sets
is disjoint from every totally minimal system.
\end{thm}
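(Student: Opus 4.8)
The plan is to leave part (1) to its citation \cite{HY05} and to prove part (2) through the $\F$-point transitivity machinery developed above. By Theorem \ref{thm:trans-periodic-set}, a transitive system $(X,T)$ with dense small periodic sets is $b\F_{wt}$-point transitive. Hence, applying Lemma \ref{lem:disjoint} with $\F=b\F_{wt}$, it suffices to show that every totally minimal system $(Y,S)$ is minimal (which is immediate) and satisfies $Rec_{\kappa(b\F_{wt})}(Y,S)=Y$; that is, that every point of a totally minimal system is $\kappa(b\F_{wt})$-recurrent. This recurrence statement is the entire content of part (2).

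To verify it, fix $y\in Y$ and a neighborhood $V$ of $y$, and check that $N(y,V)\in\kappa(b\F_{wt})$, i.e.\ that $N(y,V)$ meets every $F\in b\F_{wt}$. The crucial preliminary is a combinatorial unwinding of the block family. If $F\in b\F_{wt}$, there is $F'\in\F_{wt}$ witnessing this, and since $F'$ is weakly thick there is a \emph{single} $k\in\N$ for which $\{n:kn\in F'\}$ is thick, so $F'$ contains arbitrarily long runs of consecutive multiples of $k$, i.e.\ arbitrarily long arithmetic progressions of common difference $k$ sitting inside $k\Z$. The block property then shifts such runs into $F$, so $F$ itself contains arbitrarily long arithmetic progressions of common difference $k$, now at arbitrary offsets. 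The point of this reduction is that the difference $k$ is exactly the power of $S$ we will exploit next.

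Now I would use total minimality to manufacture a uniform return time. Since $(Y,S)$ is totally minimal, $(Y,S^k)$ is minimal, so for the opene set $V$ the family $\{(S^k)^{-j}V:j\ge 0\}$ covers $Y$: its complement is a closed $S^k$-invariant set missing the nonempty $V$, hence empty by minimality. By compactness $Y=\bigcup_{j=0}^{N}(S^k)^{-j}V$ for some $N\in\N$, so for every $z\in Y$ there is $j\in\{0,\dots,N\}$ with $(S^k)^j z\in V$. Given $F$ as above, pick a progression $\{a,a+k,\dots,a+Nk\}\subset F$ of length $N+1$ and set $z=S^a y$; the uniform bound furnishes $j$ with $S^{a+jk}y=(S^k)^j z\in V$, whence $a+jk\in N(y,V)\cap F$. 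As $F\in b\F_{wt}$ was arbitrary, $N(y,V)\in\kappa(b\F_{wt})$, and as $y$ and $V$ were arbitrary, $Rec_{\kappa(b\F_{wt})}(Y,S)=Y$. Feeding this together with the $b\F_{wt}$-point transitivity of $(X,T)$ into Lemma \ref{lem:disjoint} yields $(X,T)\bot(Y,S)$.

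The main obstacle I anticipate is the first reduction rather than the dynamics: one must pin down that weakly thick sets, and hence the block family $b\F_{wt}$, are governed by arithmetic progressions of a \emph{fixed} common difference $k$ (not arbitrary long progressions), and track how the block operation frees the offset while preserving that common difference. Once this arithmetic description is secured, the dynamical half is routine, since total minimality is needed only to pass to the minimal system $(Y,S^k)$ for the relevant $k$ and to extract an offset-independent return time to $V$, which is precisely what guarantees that one long $k$-progression inside $F$ must hit $N(y,V)$.
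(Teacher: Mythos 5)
Your proposal is correct and follows essentially the same route as the paper: Theorem \ref{thm:trans-periodic-set} gives $b\F_{wt}$-point transitivity, Lemma \ref{lem:disjoint} reduces part (2) to showing $Rec_{\kappa b\F_{wt}}(Y,S)=Y$ for every totally minimal $(Y,S)$, and the verification rests on the fact that every $F\in b\F_{wt}$ contains arbitrarily long arithmetic progressions of a single fixed common difference $k$, which syndetic return times under $S^k$ must hit. The only (harmless) difference is in handling the offsets of those progressions: the paper pigeonholes the offsets modulo $k$ to produce a translate $-q+F\in\F_{wt}$ and then applies minimality of $(Y,S^k)$ at the point $S^q y$ via $N(S^qy,V)+q\subset N(y,V)$, whereas you absorb arbitrary offsets directly through the uniform covering constant $Y=\bigcup_{j=0}^{N}(S^k)^{-j}V$; both treatments are valid.
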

\begin{proof}
(1). By Theorem \ref{thm:HY-system}, every HY-system is $\F_{wt}$-point transitive.
Then by Lemma \ref{lem:disjoint} it suffices to show that for every minimal system $(X,T)$ we have
$Rec_{\kappa\F_{wt}}(X,T)=X$.

Let $(X,T)$ be a minimal system, $x\in X$ and $U$ be a neighborhood of $x$.
Then it suffices to show that for every $F\in\F_{wt}$, $N(x, U)\cap F\neq\emptyset$.
Let $F\in\F_{wt}$, there exists $k\in\N$
such that $\{n\in\N: kn\in F\}$ is thick. Since $x$ is a minimal point in $(X,T)$,
$x$ also is a minimal point $(X,T^k)$. Then $\{n\in\N: (T^k)^nx\in U\}$ is syndetic.
Therefore, $N(x,U)\cap F\neq\emptyset$.

(2). By Theorem \ref{thm:trans-periodic-set},
every transitive system with dense small periodic sets is $b\F_{wt}$-point transitive.
Then by Lemma \ref{lem:disjoint} it suffices to show that for every totally minimal system $(X,T)$
we have $Rec_{\kappa b\F_{wt}}(X,T)=X$. We first prove the following Claim.

\smallskip
{\bf Claim}: If $F\in b\F_{wt}$, there exists $q\in \Z$ such that $-q+F\in \F_{wt}$.

{\bf Proof of the Claim}:
Let $F\in b\F_{wt}$, then there exist two sequences $\{a_i\}_{i=1}^\infty$, $\{b_i\}_{i=1}^\infty$ in $\Z$
and $k\in\N$ such that $\bigcup_{i=1}^\infty(a_i + k[b_i, b_i+i])\subset F$.
Without lose of generality, assume that there exists $q\in [0,k)$ such that for every $i\in N$
$a_i\equiv q \pmod k$. Let $a_i=kc_i +q$ and $F'=\bigcup_{i=1}^\infty k[b_i+c_i, b_i+c_i+i]$,
then $F'$ is weakly thick and $q+F'\subset F$.
Thus, $-q+F$ is weakly thick.

Let $(X,T)$ be a totally minimal system, $x\in X$ and $U$ be a neighborhood of $x$.
Then it suffices to show that for every $F\in b\F_{wt}$, $N(x, U)\cap F\neq\emptyset$.
Let $F\in b\F_{wt}$, by the Claim there exists $q\in\Z$
such that $-q+F$ is weakly thick. Let $F'=-q+F$.
Since $(X,T)$ is totally minimal, $N(T^px,U)\cap F'\neq\emptyset$,
then $N(x,U)\cap F\neq\emptyset$ since $N(T^px,U)+p\subset N(x,U)$.
\end{proof}

\subsection{Weak disjointness}
Let $(X, T)$ and $(Y, S)$ be two TDSs, they are called {\em weakly disjoint} if $(X\times Y, T\times S)$ is
transitive, denoted by $(X, T) \curlywedge (Y, S)$ or $X\curlywedge Y$ (\cite{P72}).
It is easy to see that if $(X, T)$ and $(Y, S)$ are weakly disjoint, then both of them are transitive.

\begin{lem}\label{lem:weakly-disjoint}
Let $\F$ be a family.
If $(X,T)$ is $\F$-transitive and $(Y,S)$ is transitive and $\kappa\F$-center,
then $(X, T) \curlywedge (Y, S)$.
\end{lem}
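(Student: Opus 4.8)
The plan is to show directly that $(X\times Y, T\times S)$ is transitive, for which it suffices to check that the hitting time set is nonempty for every pair of basic opene sets $A=U_1\times W_1$ and $B=U_2\times W_2$, with $U_1,U_2$ opene in $X$ and $W_1,W_2$ opene in $Y$; nonemptiness on such a basis already forces transitivity (and hence infinite hitting times) by the usual Baire-category argument. Since $N(A,B)=N(U_1,U_2)\cap N(W_1,W_2)$, the whole problem reduces to producing a single integer lying in both factors. The mechanism that glues them together is the pairing between a family and its dual: any member of $\F$ meets any member of $\kappa\F$. So I want to arrange that one factor is (up to a controlled shift) a member of $\F$ and the other a member of $\kappa\F$.

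First I would use transitivity of $(Y,S)$ to pick some $m\in N(W_1,W_2)$, so that $W:=W_1\cap S^{-m}W_2$ is opene. The $\kappa\F$-center hypothesis then yields $N(W,W)\in\kappa\F$, and a direct check gives $m+N(W,W)\subset N(W_1,W_2)$: if $n\in N(W,W)$, take $w\in W$ with $S^n w\in W$; then $w\in W_1$ and $S^{n+m}w\in W_2$, so $n+m\in N(W_1,W_2)$.

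The step I expect to be the main obstacle is matching this shift by $m$ on the $X$-side, because neither $\F$ nor $\kappa\F$ is assumed translation invariant, so I cannot simply translate the $\kappa\F$-set. The trick is to push the shift into $X$ instead. Since $(X,T)$ is $\F$-transitive and every family here lies in $\F_{inf}$, the system $(X,T)$ is transitive, hence $T$ is surjective, so $T^{-m}U_2$ is opene. Then $\F$-transitivity gives $F:=N(U_1,T^{-m}U_2)\in\F$, and unwinding the definition of the preimage shows $n\in F$ if and only if $n+m\in N(U_1,U_2)$.

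Finally, with $F\in\F$ and $N(W,W)\in\kappa\F$, the definition of the dual family forces $F\cap N(W,W)\neq\emptyset$; choosing $n$ in this intersection yields $n+m\in N(U_1,U_2)\cap N(W_1,W_2)=N(A,B)$. Hence $N(A,B)\neq\emptyset$ for all basic opene $A,B$, so $(X\times Y,T\times S)$ is transitive and $(X,T)\curlywedge(Y,S)$. The only genuinely delicate point is the surjectivity observation that lets me keep the shift inside $X$; everything else is bookkeeping with the $\F$/$\kappa\F$ duality.
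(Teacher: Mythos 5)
Your proof is correct and follows essentially the same route as the paper's: pick $m\in N(W_1,W_2)$, set $W=W_1\cap S^{-m}W_2$, and intersect $N(U_1,T^{-m}U_2)\in\F$ with $N(W,W)\in\kappa\F$ via the duality pairing, which yields $m+\bigl(N(U_1,T^{-m}U_2)\cap N(W,W)\bigr)\subset N(U_1\times W_1,U_2\times W_2)$. Your extra observation that $T^{-m}U_2$ is opene (via surjectivity of a transitive map on a compact space) is a point the paper glosses over, and if you want infinitude of the hitting sets without appealing to Baire category, just note that $m$ can be chosen arbitrarily large.
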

\begin{proof}
Let $U_1, U_2$ be two opene subsets of $X$ and  $V_1, V_2$ be two opene subsets of $Y$.
Since $(Y,S)$ is transitive, then there exists $n\in\mathbb N$ such that $V_1\cap S^{-n}V_2\neq\emptyset$.
Let $V=V_1\cap S^{-n}V_2$. Then it is easy to see that
\[ n+N(U_1, T^{-n}U_2)\cap N(V,V)\subset N(U_1\times V_1, U_2\times U_2).\]
Since $(X,T)$ is $\F$-transitive and $(Y,S)$ is $\kappa\F$-center,
$N(U_1, T^{-n}U_2)\cap N(V,V)\neq\emptyset$.
Then $N(U_1\times V_1, U_2\times U_2)\neq\emptyset$.
Thus, $(X\times Y, T\times S)$ is transitive, \ie, $(X, T)\curlywedge (Y, S)$.
\end{proof}

\begin{lem}\label{lem:wt-P}
Let $F\subset \Z$ be weakly thick, then there exists a P-system $(X,T)$,
a transitive point $x\in X$ and an opene subset $U$ of $X$ such that
$N(x,U)\subset F$.
\end{lem}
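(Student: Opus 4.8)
The plan is to build the desired system as a product of a finite cyclic rotation with the full shift on two symbols, pushing all the combinatorial work into the construction of a single transitive point of the shift factor. First I would unwind the hypothesis: since $F$ is weakly thick there is $k\in\N$ with $G=\{m\in\Z: km\in F\}$ thick, and by the very definition of $G$ one has $km\in F$ if and only if $m\in G$, so it suffices to produce a point whose return times to a suitable opene set are contained in $kG\subseteq F$.

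Let $(\mathbb{Z}/k\mathbb{Z}, R)$ be the cyclic rotation $R(r)=r+1 \pmod k$ and let $(\Omega,\sigma)$ be the one-sided full shift on $\{0,1\}$. Set $X=\mathbb{Z}/k\mathbb{Z}\times\Omega$ and $T=R\times\sigma$. This $X$ is automatically a P-system: every point of $\mathbb{Z}/k\mathbb{Z}$ is periodic, periodic points are dense in $\Omega$, and a product point $(r,\eta)$ with $\eta$ of period $q$ is periodic of period $\mathrm{lcm}(k,q)$, so periodic points are dense in $X$; transitivity will follow once a transitive point is exhibited. I would take the transitive point to be $x=(0,\omega)$ and the opene set to be $U=\{0\}\times[1]$, where $[1]=\{y\in\Omega: y_0=1\}$. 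Then $T^nx=(n\bmod k,\sigma^n\omega)$, so $N(x,U)=\{n\in\Z: k\mid n,\ \omega_n=1\}$; writing $n=km$ this equals $\{km: \omega_{km}=1\}$, and by the equivalence above it lands in $F$ as soon as $\omega$ satisfies the single constraint that $\omega_{km}=1$ implies $m\in G$.

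It remains to construct $\omega\in\Omega$ meeting this constraint and, at the same time, witnessing transitivity of $x$, that is, such that every finite word over $\{0,1\}$ occurs in $\omega$ starting at positions of every residue class modulo $k$ (this guarantees that the orbit of $(0,\omega)$ meets every cylinder $\{r\}\times[w]$, hence is dense). I would build $\omega$ greedily: enumerate all pairs (finite word $w$, residue $r$) with each pair listed infinitely often, initialise $\omega$ with $0$'s, and embed the pairs one by one into ever larger, pairwise disjoint blocks, filling the gaps with $0$'s. To embed a word $w$ of length $\ell$ at residue $r$ while respecting the constraint, observe that the positions inside the block that are divisible by $k$ have the form $k\mu$ for $\mu$ running through an interval of length about $\ell/k$; since $G$ is thick it contains an interval $[p,p+\lceil \ell/k\rceil+1]\subseteq G$ with $p$ as large as we please, and placing the block so that all these $\mu$ fall in that interval forces $\mu\in G$ whenever the corresponding symbol of $w$ equals $1$, while the starting position can still be chosen $\equiv r \pmod k$ and arbitrarily large. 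The main obstacle is precisely this embedding step: one must realise every word at every residue (for transitivity) without ever marking a multiple of $k$ outside $kG$ (for the containment $N(x,U)\subseteq F$), and it is exactly the thickness of $G$ — furnishing arbitrarily long intervals in which the at-most-$\lceil\ell/k\rceil$ relevant indices can be housed — that lets the two requirements coexist. Once $\omega$ is built, $x=(0,\omega)$ is a transitive point of the P-system $(X,T)$ and $N(x,U)\subseteq kG\subseteq F$, which completes the proof.
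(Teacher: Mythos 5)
Your proof is correct, but it solves the problem by a different decomposition than the paper. The paper works entirely inside one subshift: writing $F_0=\bigcup_n k[a_n,a_n+n]\subset F$, it constructs a single point $x=\mathbf 1_A\in\{0,1\}^{\Z}$ with $A\subset F_0$, takes $X=\overline{Orb(x,T)}$ and $U=\{y\in X: y(0)=1\}$, so that $N(x,U)=A\subset F$ holds by fiat; the real work there is arranging, via a nested scheme of prefixes $B_n$ repeated consecutively many times inside the windows $k[a_p,a_p+p]$, that $x$ is recurrent \emph{and} that the orbit closure has dense periodic points (the periodic sequences $B_n^\infty$ lie in $X$ and approximate every word occurring in $x$). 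You instead take the ambient system to be the full product $\mathbb Z/k\mathbb Z\times\{0,1\}^{\Z}$, for which dense periodic points are automatic, and concentrate all the work in building one transitive point $\omega$ subject to the single constraint that $\omega_{km}=1$ forces $m\in G$; the rotation factor cleanly bookkeeps the residue classes that the paper handles implicitly through the positions of its blocks. Both constructions exploit thickness in the same way --- arbitrarily long intervals of $G$ (resp.\ the runs $k[a_n,a_n+n]$) give room to embed arbitrary words without violating the return-time constraint --- but your version trades the paper's delicate verification of dense periodic points in an orbit closure (which the paper dismisses with ``it is easy to see'') for a trivially verified P-system, at the harmless cost that your $X$ is not an orbit closure, which the lemma nowhere requires. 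One small point to make explicit: thickness as defined gives an interval of each length somewhere, and you need such intervals with left endpoint beyond any prescribed bound; this follows by taking an interval of length $L+M+1$ and discarding its initial segment, so your ``with $p$ as large as we please'' is justified, but it deserves a line.
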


\begin{proof}
Let $F\subset \Z$ be a weakly thick set, there exists $k\in\N$ and
a sequence $\{a_n\}_{n=1}^\infty$ of $\Z$ such that
$\bigcup_{n=1}^\infty(k[a_n, a_n+n])\subset F$.
Without lose of generality, assume that $a_{n+1}>a_n+2n$.
Let $F_0=\bigcup_{n=1}^\infty k[a_n,a_n+n]$.
We will construct $x^{(n)}=\mathbf{1}_{A_n}\in \{0,1\}^{\Z}$ such that
$A_n\subset F_0$ and $x=\lim x^{(n)}=\mathbf{1}_A$.
Moreover, $X=\overline{Orb(x,T)}$ is a P-system and
$N(x,U)=A\subset F_0$, where $U=\{y\in X:\,y(0)=1\}$.

To obtain $x^{(n)}$ we construct a finite word $B_n$ such that
$x^{(n)}$ begins with $B_n$ and $B_n$ appears in $x^{(n)}$ weakly thick,
and in the next step let $B_{n+1}$ begin with $B_n$.
Let $\{P_i\}_{i=0}^\infty$ is a partition of $\N$ and each $P_i$ is infinite.

\smallskip
{\bf Step 1:} Construct $x^{(1)}$.
Let $B_1=\mathbf{1}_{F_0}[0, ka_1+k-1]$ and $r_1=|B_1|$ be the length of $B_1$.
Put $x^{(1)}[0, r_1-1]=B_1$.
For every $p\in P_1$, if $kp\geq r_1$, let $l$ be the integer part of $\frac{kp}{r_1}$,
put $x^{(1)}[ka_p+jr_1, ka_p+(j+1)r_1-1]=B_1$ for $j=0,1,\ldots, l-1$ and
$x^{(1)}(i)=0$ for other undefined position $i$.
Let $A_1\subset \Z$ such that $\mathbf 1_{A_1}=x^{(1)}$, then $A_1\subset F_0$.

\smallskip
{\bf Step 2:} Construct $x^{(2)}$.
Let $B_2=x^{(1)}[0, ka_2-1]$ and $r_2=|B_2|$ be the length of $B_2$.
Put $x^{(2)}[0, r_2-1]=B_2$.
For every $p\in P_2$, if $kp\geq r_2$, let $l$ be the integer part of $\frac{kp}{r_2}$,
put $x^{(2)}[ka_p+jr_2, ka_p+(j+1)r_2-1]=B_2$ for $j=0,1,\ldots, l-1$ and
$x^{(2)}(i)=x^{(1)}(i)$ for other undefined position $i$.
Let $A_2\subset \Z$ such that $\mathbf 1_{A_2}=x^{(2)}$, then $A_1\subset A_2\subset F_0$.

\smallskip
{\bf Step 3:} If $x^{(n)}$ has been constructed, now construct $x^{(n+1)}$.
Let $B_{n+1}=x^{(n)}[0, ka_{n+1}-1]$ and $r_{n+1}=|B_{n+1}|$ be the length of $B_{n+1}$.
Put $x^{(n+1)}[0, r_{n+1}-1]=B_{n+1}$.
For every $p\in P_{n+1}$, if $kp\geq r_{n+1}$,
let $l$ be the integer part of $\frac{kp}{r_{n+1}}$,
put $x^{(n+1)}[ka_p+jr_{n+1}, ka_p+(j+1)r_{n+1}-1]=B_{n+1}$ for $j=0,1,\ldots, l-1$ and
$x^{(n+1)}(i)=x^{(n)}(i)$ for other undefined position $i$.
Let $A_{n+1}\subset \Z$ such that $\mathbf 1_{A_{n+1}}=x^{(n+1)}$,
then $A_n\subset A_{n+1}\subset F_0$.

In such a way, let $x=\lim x^{(n)}=\mathbf{1}_A$ and $X=\overline{Orb(x,T)}$,
then $x$ is a recurrent point and $N(x,U)=A\subset F_0$.
It is easy to see that $X$ has dense periodic points.
This completes the  proof.
\end{proof}

\begin{thm}
Let $(X,T)$ be a TDS, then the following conditions are equivalent:
\begin{enumerate}
\item $(X,T)$ is $\Delta^*(\F_{wt})$-transitive.
\item $(X,T)$ is weakly disjoint from every transitive and $\Delta(\F_{wt})$-center system.
\item $(X,T)$ is weakly disjoint from every $b\F_{wt}$-point transitive system.
\item $(X,T)$ is weakly disjoint from every P-system.
\end{enumerate}
\end{thm}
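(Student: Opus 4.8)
The plan is to establish the cycle $(1)\Rightarrow(2)\Rightarrow(3)\Rightarrow(4)\Rightarrow(1)$, where the first three implications are ``soft'' inclusions between the relevant classes of systems and only the last one uses the subshift construction of Lemma~\ref{lem:wt-P}.

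For $(1)\Rightarrow(2)$ I would apply Lemma~\ref{lem:weakly-disjoint} with the family $\F=\Delta^*(\F_{wt})$. Since $(X,T)$ is $\Delta^*(\F_{wt})$-transitive, it remains to check that every transitive $\Delta(\F_{wt})$-center system $(Y,S)$ is $\kappa\F$-center, \ie\ $\kappa\Delta^*(\F_{wt})$-center. Here I would use the elementary fact $\G\subset\kappa\kappa\G$, valid for any family: applied to $\G=\Delta(\F_{wt})$ it gives $\Delta(\F_{wt})\subset\kappa\kappa\Delta(\F_{wt})=\kappa\Delta^*(\F_{wt})$, and since the center property is monotone in the family, $\Delta(\F_{wt})$-center implies $\kappa\Delta^*(\F_{wt})$-center. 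Lemma~\ref{lem:weakly-disjoint} then yields $X\curlywedge Y$.

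For $(2)\Rightarrow(3)$ and $(3)\Rightarrow(4)$ I would prove the chain of class inclusions
\[\{\textrm{P-systems}\}\subset\{b\F_{wt}\textrm{-point transitive systems}\}\subset\{\textrm{transitive }\Delta(\F_{wt})\textrm{-center systems}\},\]
so that weak disjointness from a larger class forces it from a smaller one. The left inclusion is immediate: a P-system is transitive with dense periodic points, hence has dense small periodic sets, hence is $b\F_{wt}$-point transitive by Theorem~\ref{thm:trans-periodic-set}. For the right inclusion, let $x$ be a $b\F_{wt}$-transitive point and $U$ opene; then $N(x,U)\in b\F_{wt}$, and since $n,m\in N(x,U)$ with $m>n$ force $m-n\in N(U,U)$, one has $N(x,U)-N(x,U)\subset N(U,U)$. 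As $N(x,U)-N(x,U)\in\Delta(b\F_{wt})=\Delta(\F_{wt})$ (by the identity $\Delta(\F_{wt})=\Delta(b\F_{wt})$), upward heredity gives $N(U,U)\in\Delta(\F_{wt})$; so the system is $\Delta(\F_{wt})$-center, and it is transitive because it has a transitive point.

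The main obstacle is $(4)\Rightarrow(1)$, which I would prove by contraposition. Assume $(X,T)$ is \emph{not} $\Delta^*(\F_{wt})$-transitive, so there are opene $U_1,U_2$ with $N(U_1,U_2)\notin\kappa\Delta(\F_{wt})$; unwinding the dual yields a weakly thick $F'$ with $N(U_1,U_2)\cap(F'-F')=\emptyset$. Feeding $F'$ into Lemma~\ref{lem:wt-P} produces a P-system $(Y,S)$, a transitive point $y$ and an opene $V$ with $N(y,V)\subset F'$. The key observation is that for any transitive point one has $N(V,V)\subset N(y,V)-N(y,V)$: indeed, for $n\in N(V,V)$ the set $V\cap S^{-n}V$ is opene, so transitivity of $y$ gives $m$ with $S^m y\in V$ and $S^{m+n}y\in V$, whence $n=(m+n)-m\in N(y,V)-N(y,V)$. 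Hence $N(V,V)\subset F'-F'$, and therefore
\[N(U_1\times V,\,U_2\times V)=N(U_1,U_2)\cap N(V,V)\subset N(U_1,U_2)\cap(F'-F')=\emptyset.\]
Thus $X\times Y$ is not transitive, \ie\ $(X,T)$ is not weakly disjoint from the P-system $(Y,S)$, contradicting (4). The only genuinely hard input is the realization Lemma~\ref{lem:wt-P}, which is precisely where the combinatorial construction carries the weight; everything else reduces to the duality $\G\subset\kappa\kappa\G$, the difference-set bookkeeping, and the transitivity identity $N(V,V)\subset N(y,V)-N(y,V)$.
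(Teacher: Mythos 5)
Your proposal is correct and takes essentially the same route as the paper: the same cycle $(1)\Rightarrow(2)\Rightarrow(3)\Rightarrow(4)\Rightarrow(1)$, the same class inclusions $\{\textrm{P-systems}\}\subset\{b\F_{wt}\textrm{-point transitive}\}\subset\{\textrm{transitive }\Delta(\F_{wt})\textrm{-center}\}$ for the middle implications, and the same use of Lemma~\ref{lem:wt-P} plus the observation $N(V,V)\subset N(y,V)-N(y,V)$ for $(4)\Rightarrow(1)$, which you merely run in contrapositive form where the paper argues directly. One nitpick: since $0\in N(V,V)$ always but $0$ need not lie in $F'-F'$, your final display should conclude $N(U_1\times V,\,U_2\times V)\subset\{0\}$ (hence finite, so the product is not transitive) rather than $=\emptyset$ --- this is exactly the ``$\cup\{0\}$'' the paper inserts in $N(W,W)\subset (F-F)\cup\{0\}$, and it does not affect the argument.
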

\begin{proof}
(1)$\Rightarrow$(2) follows from Lemma \ref{lem:weakly-disjoint}.

(2)$\Rightarrow$(3). Let $(Y,S)$ be a $b\F_{wt}$-point transitive system.
It suffices to show that $(Y,S)$ is $\Delta(\F_{wt})$-center.
Let $U$ be an opene subset of $Y$, there exists a $b\F_{wt}$-transitive point $y\in U$.
It is easy to see that $N(U,U)=N(y,U)-N(y,U)$.
Then $N(U,U)\in \Delta(\F_{wt})$ since $N(y,U)\in b\F_{wt}$.

(3)$\Rightarrow$(4) follows from the fact that
every P-system is $b\F_{wt}$-point transitive.

(4)$\Rightarrow$(1). Let $U,V$ be two opene subsets of $X$.
 For every $F\in \F_{wt}$, by Lemma \ref{lem:wt-P}
there exists a P-system $(Y,S)$, a transitive point $x\in X$ and an opene subset $W$ of $X$
such that $N(x,W)\subset F$. Then $N(W,W)\subset (F-F)\cup\{0\}$.
Since  $(X,T)$ is weakly disjoint from $(Y,S)$, $N(U,V)\cap N(W,W)$ is infinite,
then $ N(U,V)\cap (F-F)\neq\emptyset$.
Thus, $N(U,V)\in \Delta^*(\F_{wt})$.
\end{proof}

\section*{Appendix A}
In this appendix, as announced before, we construct an $\F_{ip}$-point transitive system which is not weakly mixing.
\begin{proof}[Proof of Proposition \ref{prop:Fip-not-wm}]
Let $\Sigma=\{0,1\}^{\mathbb Z_+}$ and $T$ be the shift map.
We will construct a sequence $W_1\subset W_2\subset W_3\cdots\subset \Z$
and  $W=\bigcup_{n=1}^\infty W_n$.
Let $x=\mathbf 1_W$ and $X=\overline{Orb(x,T)}$,
we want to show that $(X,T)$ is $\F_{ip}$-point transitive but not weakly mixing.

Let $p_0=0$ and $p^{(i)}_{j,0}=0$ for all $i,j\in\N$. Let $W_0=\{p_0\}=A_0$. Choose $p_1>2$ and set
\[W_1=W_0\cup \{p_1\}.\]

Arrange $W_2$ as
\[W_1=A_0\cup A_1,\]
where $A_1=\{p_1\}$. Let $k_1=1$,
then $\min A_i>3\max A_j +2$ for $0\leq j<i\leq k_1$.

Choose $p_2$ and $p^{(1)}_{1,1}$ are positive integers to be defined later and set

\[W_2=W_1\bigcup\left(W_1+p_2\right)
\bigcup\left(W_1+p^{(1)}_{1,1}-1\right).\]

Arrange  $W_2$ as

\[W_2=W_1\bigcup A_2 \bigcup A_3.\]

Let $k_2=3$, choose appropriate $p_2$ and $p^{(1)}_{1,1}$ such that
$\min A_i>3\max A_j +2$ for $0\leq j<i\leq k_2$.

Choose $p_3$, $p^{(1)}_{1,2}$, $p^{(2)}_{1,1}$ and $p^{(2)}_{2,1}$
are positive integers to be defined later and set
\begin{eqnarray*}
W_3=W_2&\bigcup&\left(W_2+p_3\right)
\bigcup \left(W_1+p^{(1)}_{1,2}-1+FS\{p^{(1)}_{1,i}\}_{i=0}^1\right)\\
&\bigcup&\left(W_2+p^{(2)}_{1,1}-1\right)
\bigcup \left(W_2+p^{(2)}_{2,1}-2\right).
\end{eqnarray*}

Arrange  $W_3$ as

\[W_3=W_2\bigcup A_{4}\bigcup A_{5}\bigcup A_{6}\bigcup A_{7}.\]

Let $k_3=7$,
choose appropriate  $p_3$, $p^{(1)}_{1,2}$, $p^{(2)}_{1,1}$ and $p^{(2)}_{2,1}$
such that
$\min A_i>3\max A_j +2$ for $0\leq j<i\leq k_3$.

Assume that $W_n$ has been constructed, now construct $W_{n+1}$,
choose $p_{n+1}$, $p^{(1)}_{1,n}$, $p^{(2)}_{1,n-1}$, $p^{(2)}_{2,n-1}$,
$\ldots$, $p^{(n)}_{n,1}$
are positive integers to be defined later and  set

\begin{eqnarray*}
W_{n+1}=&W_n&\bigcup\left(W_n+p_{n+1}\right)\bigcup
\left(W_1+p^{(1)}_{1,n}-1+FS\{p^{(1)}_{1,i}\}_{i=0}^{n-1}\right)\\
&&\bigcup\left(\bigcup_{j=1}^2\left(W_2+p^{(2)}_{j,n-1}-j+FS\{p^{(2)}_{j,i}\}_{i=0}^{n-2}\right)\right)\bigcup\cdots\\
&&\bigcup\left(\bigcup_{j=1}^{n-1} \left(W_{n-1}+p^{(n-1)}_{j,2}-j+FS\{p^{(n-1)}_{j,i}\}_{i=0}^1\right)\right)\\
&&\bigcup\left(\bigcup_{j=1}^{n} \left(W_n+p^{(n)}_{j,1}-j\right)\right).
\end{eqnarray*}

Arrange $W_{n+1}$ as
\[ W_{n+1}=W_{n}\bigcup A_{k_{n}+1}\bigcup A_{k_{n}+2}\bigcup
\cdots\bigcup A_{k_{n+1}},\]
where $A_s$ is $W_n+p_{n+1}$ or in the form of $W_r+p^{(r)}_{j,t}-j+FS\{p^{(r)}_{j,i}\}_{i=0}^{t-1}$.

Choose appropriate  $p_{n+1}$, $p^{(1)}_{1,n}$, $p^{(2)}_{1,n-1}$,
$p^{(2)}_{2,n-1}$,
$\ldots$, $p^{(n)}_{n,1}$
such that
$\min A_i>3\max A_j +2$ for $0\leq j<i\leq k_{n+1}$.

Let $W=\bigcup_{n=0}^\infty W_n$. For every $n\in\N$, let
$P^{(n)}_0=FS\{p_j\}_{j=n+1}^\infty$,
$P^{(n)}_i=FS\{p^{(n)}_{i,j}\}_{j=1}^\infty$ for $i=1,\ldots,n$.
Then for every $n\in \N$, $W_n+P^{(n)}_i-i\subset W$ for $i=0,1,\ldots, n$.

Let $x=\mathbf 1_W$ and $X=\overline{Orb(x,T)}$.
First, we show that $(X,T)$ is $\F_{ip}$-point transitive.
It suffices to show that $x$ is an $\F_{ip}$-transitive point.
Let $r_n=\max W_n$ and
$[W_n]=\{y\in X: $ for $i\in [0, r_n]$, $y(i)=1$ if and only if $i\in W_n\}$.
Then $\{[W_n]: n\in \N\}$ is a neighborhood base of $x$. It is easy to see that
\[N(x, [W_n])\supset \bigcup_{i=0}^n (P^{(n)}_i-i).\]
For every opene subset $U$ of $X$, there exists $k\in \Z$ such that $T^kx\in U$.
By the continuity of $T$, there exists $n>k$ such that $T^k([W_n])\subset U$,
so
\[P^{(n)}_k  \subset k+N(x, [W_n])\subset N(x,U).\]
Thus, $x$ is an $\F_{ip}$-transitive point.

To see that $(X,T)$ is not weakly mixing, it suffices to show that $N([1], [1])=W-W$ is not thick.
In fact, we show that for every $n\geq 1$, if $a\neq b\in W_n-W_n$ then $|a-b|>2$.

{\bf Claim 1}: If $a\in A_i-A_i$ for some $i\in [k_n+1, k_{n+1}]$ is not zero, then
there exist $1\leq i_1< i_2 \leq k_n$ such that $a\in A_{i_2}-A_{i_1}$.

We prove this claim by induction of $n$.  It is easy to see that
the result holds for $n=1, 2$.  Now assume that the result holds for  $1,2,\ldots,n-1$.
Let $a\in A_i-A_i$ for some $i\in [k_n+1, k_{n+1}]$.
By the construction, every element in $A_i$ has the same part in
$\{$ $p_{n+1}$, $p^{(1)}_{1,n}-1$, $p^{(2)}_{1,n-1}-1$,
$p^{(2)}_{2,n-1}-2$, $\ldots$, $p^{(n)}_{n,1}-n$$\}$, then
$a\in W_{n-1}-W_{n-1}$, by the assume the proof is complete.

{\bf Claim 2}: For every $n\geq 1$, if $a\neq b\in W_n-W_n$ then $|a-b|>2$.

Again we prove this claim by induction of $n$. It is easy to see that
the result holds for $n=1, 2$.
Now assume that the result holds for  $1,2,\ldots,n-1$.

Let $a\neq b\in W_n-W_n$. There exist $s_1, s_2, t_1$ and $t_2\in [0, k_n]$
such that $a\in A_{s_1}-A_{s_2}$ and $b\in A_{t_1}-A_{t_2}$.
By the Claim 1, we can assume that $s_1>s_2$ and $t_1> t_2$.

\begin{enumerate}
\item Case $s_1>t_1$.  Then $a-b\geq \min A_{s_1}-3\max A_{s_1-1}>2$.
\item Case $s_1=t_1$.
 \begin{enumerate}
 	\item $s_2=t_2$. Then $a-b\in (A_{s_1}-A_{s_2})- (A_{t_1}-A_{t_2})
		=(A_{s_1}-A_{t_1})-(A_{s_2}-A_{t_2})$.
		By the Claim 1, we have  $a-b=c-d$ for some $c,d\in W_{n-1}-W_{n-1}$,
		so $a-b>2$ by induction.
	\item $s_2>t_2$. Then $a-b\in (A_{s_1}-A_{s_2})- (A_{t_1}-A_{t_2})
		=(A_{s_1}-A_{t_1})-(A_{s_2}-A_{t_2})$. By the Claim 1, we have
        $A_{s_1}-A_{t_1}\subset W_{n-1}-W_{n-1}$.
		If $s_2> k_{n-1}$, this turn to the case (1), so $|a-b|>2$.
		If $s_2\leq k_{n-1}$, then $|a-b|>2$ by induction.
 \end{enumerate}
\end{enumerate}
Hence, $(X,T)$ is as required.
\end{proof}


\end{document}